\def\a{\alpha}
\def\b{\beta}
\def\e{\varepsilon}
\def\s{\sigma}
\def\L{\Lambda}
\def\bb{\mathrm {b}}
\def\beq{\begin{equation}}
\def\eeq{\end{equation}}
\def\bea{\begin{eqnarray*}}
\def\eea{\end{eqnarray*}}
\def\beaa{\begin{eqnarray}}
\def\eeaa{\end{eqnarray}}
\def\ba{\begin{array}}
\def\ea{\end{array}}
\def\bp{\begin{proof}}
\def\r{\end{proof}}
\def\la{\langle}
\def\ra{\rangle}
\def\.{\cdot}
\def \RM{\mathbb{R}}
\def \CM{\mathbb{C}}
\def \KM{\mathbb{K}}
\def \HM{\mathbb{H}}
\def \SM{\mathbb{S}}
\def\W{{\mathcal W}}
\def\R{{\mathcal R}}
\def\id{\mathrm{id}}
\def\tr{\mathrm{tr}}
\def\Sym{\mathrm{Sym}}
\def\S{\mathrm{S}}
\def\U{\mathrm{U}}
\def\E{\mathrm{E}}
\def\F{\mathrm{F}}
\def\End{\mathrm{End}}
\def\Cl{\mathrm{Cl}}
\def\Spin{\mathrm{Spin}}
\def\ad{\mathrm{ad}}
\def\dim{\mathrm{dim}}
\def\Sym{\mathrm{Sym}}
\def\Cas{\mathrm{Cas}}
\def\so{\mathfrak{so}}
\def\su{\mathfrak{su}}
\def\spin{\mathfrak{spin}}
\def\sp{\mathfrak{sp}}
\def\ee{\mathfrak{e}}
\def\ff{\mathfrak{f}}
\def\gg{\mathfrak{g}}
\def\hh{\mathfrak{h}}
\def\kk{\mathfrak{k}}
\def\mm{\mathfrak{m}}
\def\uu{\mathfrak{u}}
 \def\ii{\mathfrak{i}}
\def\jj{\mathfrak{j}}
\newtheorem{epr}{Proposition}[section]
\newtheorem{ath}[epr]{Theorem}
\newtheorem{elem}[epr]{Lemma}
\newtheorem{ecor}[epr]{Corollary}
\theoremstyle{definition}
\newtheorem{ede}[epr]{Definition}
\newtheorem{ere}[epr]{Remark}
\title{Invariant four-forms and symmetric pairs}
\author{Andrei Moroianu, Uwe Semmelmann}
\address{Andrei Moroianu \\ CMLS\\ {\'E}cole
  Polytechnique \\ UMR 7640 du CNRS
\\ 91128 Palaiseau \\ France}
\email{am@math.polytechnique.fr}
\address{Uwe Semmelmann\\
Institut f\"ur Geometrie und Topologie \\
Fachbereich Mathematik\\
Universit{\"a}t Stuttgart\\
Pfaffenwaldring 57 \\
70569 Stuttgart, Germany
}
\email{uwe.semmelmann@mathematik.uni-stuttgart.de}
\date{\today}
\thanks{This work was supported through the program "Research in Pairs" by the Mathematisches Forschungsinstitut Oberwolfach. 
We thank the institute for the hospitality and the stimulating research environment. }
\begin{document}

\begin{abstract} We give criteria for real, complex and quaternionic representations to define
$s$-representations, focusing on exceptional Lie algebras defined by spin representations.  As applications,
we obtain the classification of complex representations whose second exterior power is irreducible or has an irreducible
summand of co-dimension one, and we give a conceptual computation-free argument for the construction of 
the exceptional Lie algebras of compact type.

\bigskip

\noindent
2000 {\it Mathematics Subject Classification}: Primary 22E46, 20C35,
15A66, 17B25, 53C35, 57T15.

\medskip
\noindent{\it Keywords:} $s$-representations, exceptional Lie algebras, 
irreducible representations, 
representation of Lie type.
\end{abstract}

\maketitle

\section{Introduction}

The initial impetus for this text was given by an observation by John Baez \cite[p. 200]{baez} in
his celebrated paper {\em The Octonions}, concerning the construction of the exceptional Lie algebra $\ee_8$ 
as the direct sum of $\spin(16)$ and the real half-spin representation $\Sigma^+_{16}$ of $\Spin(16)$. 
After showing that the verification of the Jacobi identity 
reduces to the case where all three vectors lie in $\Sigma^+_{16}$, he writes

{\em``...unfortunately, at this point a brute-force calculation seems to be required. For two approaches that minimize 
the pain, see the books by Adams \cite{adams} and by Green, Schwartz and Witten \cite{gsw}. It would be nice to find a 
more conceptual approach."}

This problem can be rephrased in terms of so-called $s$-representations, introduced by Kostant \cite{ko56} and studied by 
Heintze and Ziller \cite{hz} or Eschenburg and Heintze \cite{eh99} among others. Roughly speaking, an $s$-representation 
is a real representation of some Lie algebra of compact type $\hh$ which can be realized as the isotropy representation of
a symmetric space. It turns out that the obstruction for a given representation $\mm$ to be an $s$-representation is encoded in some
invariant element in the fourth exterior power of $\mm$, defined by the image of the Casimir element from the 
universal enveloping algebra of $\hh$. 

In particular this obstruction automatically vanishes when $(\Lambda^4\mm)^\hh=0$. Now, this condition could seem {\em a priori} 
quite restrictive.
For instance, it can never hold if the representation carries a complex (or all the more quaternionic) structure. This 
is due to the existence of 
universal elements in  $(\Lambda^4\mm)^\hh$ which are inherent to the structure of $\mm$. Nevertheless, if these are the 
only invariant elements, one can adapt our construction by adding a $\uu(1)$ or $\sp(1)$ summand to $\hh$, depending
on whether $\mm$ is complex or quaternionic, in order to ``kill" the obstruction given by the Casimir element of $\hh$,
and turn $\mm$ into an $s$-representation of $\hh\oplus\uu(1)$ or $\hh\oplus\sp(1)$ respectively. These results are summarized
in Propositions \ref{complex} and \ref{quaternionic} below.

The idea of adding a summand to $\hh$ in order to obtain $s$-representations already appears in \cite{hz},
in a setting which presents many similarities with ours. However, the criteria for $s$-representations obtained 
by Heintze and Ziller are somewhat complementary to ours. In the complex setting, for example, Theorem 2 in \cite{hz}
can be stated as follows: If $\mm$ is a faithful complex representation of $\hh$ such that 
the orthogonal complement of $\hh\subset \llbracket \Lambda^{1,1} \mm \rrbracket$ is irreducible, then
$\mm$ is an $s$-representation with respect to $\hh\oplus\uu(1)$. In contrast, in Theorem \ref{com} we prove
a similar statement, but under the different assumption that $\Lambda^2\mm$ is irreducible.

As applications of our results we then obtain in Theorem \ref{com} a geometrical proof of the classification by 
Dynkin of complex representations with irreducible second exterior power
as well as a classification result in Theorem \ref{33} for representations with quaternionic structure whose second exterior power
decomposes in only two irreducible summands. This classification is based on a correspondence between such representations and 
$s$-representations already pointed out by Wang and Ziller in \cite[p. 257]{wz84} where a framework relating symmetric spaces and 
strongly isotropy irreducible homogeneous spaces is introduced. These results can also be compared to the classification by Calabi 
and Vesentini of
complex representations whose symmetric power has exactly two irreducible components based on the classification
of symmetric spaces, reproved by Wang and Ziller \cite{wz93} using representation theoretical methods.

In the last section we apply the above ideas in order to give a purely conceptual proof for the existence of $\ee_8$ in 
Proposition \ref{41}. The same method, using spin representations, also works for the other exceptional simple Lie algebras 
except $\gg_2$. Conversely, we show that most spin representations which are isotropy representations of equal rank 
homogeneous spaces are actually $s$-representations and define the
exceptional Lie algebras. Note that an alternative geometrical approach to the construction of $\ff_4$ and $\ee_8$ using the so-called Killing
superalgebra was recently proposed by J. Figueroa-O'Farrill \cite{of08}.

{\sc Acknowledgments.} We are grateful to Jost-Hinrich Eschenburg and Wolfgang Ziller for having brought to our attention 
the rich literature on $s$-representations and to Steffen Koenig for his helpful remarks about some classification results 
in representation theory. Special thanks are due to John~Baez whose questions motivated our work and who pointed out other related references.

\section{A characterization of $s$-representations}

Let $(\hh,B_\hh)$ be a real Lie algebra of compact type endowed 
with an $\ad_\hh$-invariant Euclidean product
and let $\rho:\hh\to
\End(\mm)$ be a faithful irreducible
representation of $\hh$ over $\RM$,
endowed with an invariant Euclidean product $B_{\mm}$ (defined up to some positive constant). 
In order to simplify the notation we will 
denote $\rho(a)(v)$ by $av$ for all $a\in\hh$ and $v\in \mm$. 
Our first goal is to find necessary and sufficient conditions for
the existence of a Lie algebra structure on $\gg:=\hh\oplus\mm$
compatible with the above data. 

\begin{elem} \label{l1} There exists a unique $(\RM$-linear$)$ bracket $[.,.]:\Lambda^2\gg^*\to\gg$ on
  $\gg:=\hh\oplus\mm$ such that
\begin{enumerate}
\item  $[.,.]$ restricted to $\hh$ is the usual Lie algebra bracket.
\item $[a,v]=-[v,a]=av$ for all $a\in\hh$ and $v\in \mm$. 
\item $[\mm,\mm]\subset\hh$.
\item $B_{\hh}(a,[v,w])=B_{\mm}(av,w)$  for all $a\in\hh$ and $v,w\in \mm$. 
\end{enumerate}
\end{elem}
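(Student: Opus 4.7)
The plan is to construct $[.,.]$ block by block according to the decomposition $\gg=\hh\oplus\mm$, using (1)--(3) to pin down three of the four blocks and (4) to define the last one.

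First I would observe that conditions (1), (2) and (3) leave freedom only in the definition of $[v,w]$ for $v,w\in\mm$. Indeed, (1) fixes $[.,.]$ on $\hh\times\hh$, (2) (together with antisymmetry) fixes $[.,.]$ on $\hh\times\mm$ and on $\mm\times\hh$, and (3) tells us that $[v,w]$ has to belong to $\hh$. Thus the existence and uniqueness question reduces to showing that there is a unique antisymmetric bilinear map $\mm\times\mm\to\hh$ satisfying condition (4).

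Next, for fixed $v,w\in\mm$ the map
\[
\hh\longrightarrow\RM,\qquad a\longmapsto B_{\mm}(av,w)
\]
is $\RM$-linear. Since $B_{\hh}$ is a Euclidean product (in particular non-degenerate) on the finite-dimensional space $\hh$, there exists a unique element of $\hh$, which we denote by $[v,w]$, such that $B_{\hh}(a,[v,w])=B_{\mm}(av,w)$ for every $a\in\hh$. This element depends bilinearly on $(v,w)$, so it only remains to check antisymmetry.

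For the antisymmetry I would use the invariance of $B_{\mm}$ under $\hh$: since $\rho$ is an orthogonal representation, every $a\in\hh$ acts on $\mm$ as a skew-symmetric endomorphism, so
\[
B_{\mm}(av,w)=-B_{\mm}(v,aw)=-B_{\mm}(aw,v).
\]
Combining this with the defining relation gives $B_{\hh}(a,[v,w])=-B_{\hh}(a,[w,v])$ for all $a\in\hh$, and the non-degeneracy of $B_{\hh}$ forces $[v,w]=-[w,v]$. Putting everything together yields a unique bracket on $\gg$ satisfying (1)--(4). There is no real obstacle here beyond keeping the book-keeping straight; the only subtlety, and the one place where the hypotheses on $B_{\hh}$ and $B_{\mm}$ are genuinely used, is the simultaneous appeal to non-degeneracy of $B_{\hh}$ (to define $[v,w]$) and invariance of $B_{\mm}$ (to guarantee its antisymmetry).
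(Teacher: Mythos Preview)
Your proof is correct and follows exactly the same approach as the paper's: the paper simply states that uniqueness is clear and that existence reduces to checking skew-symmetry of the bracket on $\mm\otimes\mm$ defined by (4), which follows from the $\ad_\hh$-invariance of $B_\mm$. You have spelled out in detail precisely these two points (Riesz representation via non-degeneracy of $B_\hh$ for uniqueness, and skew-symmetry of the $\hh$-action for antisymmetry), so there is nothing to add.
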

\bp
The uniqueness is clear. For the existence we just need to check that the restriction 
of $[.,.]$ to $\mm\otimes\mm$ given by (4) is skew-symmetric. This follows from
the $\ad_{\hh}$-invariance of $B_{\mm}$.
\r

\begin{ede}({\em cf.} \cite{ko56}) An irreducible representation $\mm$ of a normed Lie algebra
  $(\hh,B_\hh)$ such that the bracket given by Lemma \ref{l1} defines 
a Lie algebra structure on $\gg:=\hh\oplus\mm$ is called an {\em
  $s$-representation}. The Lie algebra $\gg$ is called the {\em augmented}
  Lie algebra of the $s$-representation $\mm$. 
\end{ede}

Note that the above construction was studied in greater generality by
Kostant \cite{ko}, who introduced the notion of {\em orthogonal
representation of Lie type}, satisfying conditions (1), (2) and (4)
in Lemma \ref{l1}. One can compare his characterization of representations of Lie
type (\cite{ko}, Thm. 1.50 and 1.59) with Proposition \ref{cas} below.

\begin{ere} If $\gg$ is the augmented Lie algebra of an
 $s$-representation of $(\hh,B_\hh)$ on $\mm$, then the involution
  $\s:=\id_\hh-\id_\mm$ is an automorphism of $\gg$, and $(\gg,\hh,\s)$ is a symmetric
  pair of compact type. Conversely, every irreducible symmetric
  pair of compact type can be obtained in this way.
\end{ere}

In the sequel $\{e_i\}$ will denote a $B_\mm$-orthonormal basis
of $\mm$ and $\{a_k\}$  a $B_\hh$-orthonormal basis
of $\hh$.

\begin{elem}\label{cs}
If an irreducible $s$-representation $(\mm,B_\mm)$ of $\hh$ has a $\hh$-invariant orthogonal complex structure, then 
$\hh$ is not semi-simple.
\end{elem}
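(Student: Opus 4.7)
The plan is to exploit the $\hh$-invariant complex structure $J$ on $\mm$ to produce a non-trivial derivation of the augmented Lie algebra $\gg=\hh\oplus\mm$, and then to derive a contradiction from the assumption that $\hh$ (and hence, as I will show, $\gg$) is semi-simple.

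First I would define the endomorphism $D:\gg\to\gg$ by $D|_\hh:=0$ and $D|_\mm:=J$, and verify that $D$ is a derivation of the bracket introduced in Lemma \ref{l1}. The cases with at least one argument in $\hh$ are straightforward: the case $\hh\times\hh$ is trivial, and the case $\hh\times\mm$ reduces to the fact that $J$ commutes with the $\hh$-action (which is precisely the $\hh$-invariance of $J$). The only non-obvious case is $\mm\times\mm$, where one has to check that $[Jv,w]+[v,Jw]=0$ for all $v,w\in\mm$. The natural way to do this is to pair both sides with an arbitrary $a\in\hh$ via $B_\hh$, apply property (4) of Lemma \ref{l1}, and then use that $J$ is $B_\mm$-antisymmetric (which follows from $J^2=-\id$ together with the orthogonality of $J$ with respect to $B_\mm$) to see that the two terms cancel.

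Next, assuming for contradiction that $\hh$ is semi-simple, I would show that $\gg$ is itself semi-simple. Since $\gg$ is of compact type, it suffices to check that its centre vanishes. A central element $z=a+v$ with $a\in\hh$, $v\in\mm$, must satisfy $[a,\hh]=0$ and $\hh\cdot v=0$; the former forces $a=0$ because $\hh$ has trivial centre, and the latter forces $v=0$ because $\mm$ is an irreducible faithful non-trivial representation of $\hh$ (so the subspace $\mm^\hh$ of $\hh$-fixed vectors is trivial).

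Finally, since a semi-simple Lie algebra admits only inner derivations, I would obtain $D=\ad_X$ for some $X\in\gg$. Writing $X=b+w$, the condition $D|_\hh=0$ translates, by splitting into $\hh$- and $\mm$-components, into $b\in\mathfrak{z}(\hh)$ and $\hh\cdot w=0$, so by the same arguments as above $X=0$. This contradicts the fact that $D|_\mm=J$ is non-zero and completes the proof. The main point of the argument, and the step that requires some care, is the verification that $D$ is a derivation on $\mm\times\mm$; everything else is formal.
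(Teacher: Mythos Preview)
Your proof is correct and takes a genuinely different route from the paper's. Both arguments begin with the same identity $[Jv,w]=-[v,Jw]$, obtained exactly as you describe via property (4) of Lemma~\ref{l1} and the skew-symmetry of $J$. From there the paper proceeds concretely: it defines the element $a_J:=\sum_i[e_i,Je_i]\in\hh$, observes that it lies in the centre of $\hh$, and then uses the Jacobi identity on $\gg$ to show by an explicit computation that $a_J$ acts non-trivially on $\mm$, hence $a_J\neq 0$ and $\hh$ has non-trivial centre. Your argument instead packages $J$ as a derivation $D$ of $\gg$, invokes the classical fact that derivations of a semi-simple Lie algebra are inner, and derives a contradiction by showing that the inner generator would have to lie in $\mathfrak z(\hh)\oplus\mm^\hh=0$. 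The two approaches are in a sense dual: the paper exhibits the central element directly, while your argument shows abstractly that $D$ being inner would force such an element to exist. Your proof is cleaner and computation-free, but the paper's version yields additional information that is used later --- the element $a_J$ and the relation $a_Jv=\mu Jv$ reappear in the discussion of Hermitian symmetric spaces and are essential to the quaternionic argument preceding Proposition~\ref{quaternionic}. One minor point: you assert that $\gg$ is of compact type; this is true (the scalar product $B_\hh\oplus B_\mm$ is $\ad_\gg$-invariant by construction) and is recorded in Remark~2.3, but it would not hurt to say so explicitly.
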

\bp
If $J$ denotes the complex structure of $\mm$ commuting with the $\hh$-action,
then for all $\a\in\hh$ and $v,w\in\mm$ we have
$$B_\hh(a,[Jv,w])=B_\mm(aJv,w)=B_\mm(Jav,w)=-B_\mm(av,Jw)=-B_\hh(a,[v,Jw]),$$
whence 
\beq\label{j}[Jv,w]=-[v,Jw].\eeq 
We now consider the element 
\beq\label{daj}a_J:=\sum_i[e_i,Je_i]\in\hh\eeq
which clearly belongs to the center of $\hh$.  In order to show that it does not vanish, we compute using the Jacobi identity
$$a_Jv=[a_J,v]=\sum_i[[e_i,Je_i],v]=-\sum_i([[Je_i,v],e_i]+[[v,e_i],Je_i])=-2\sum_i[[Je_i,v],e_i].$$
We take $v=e_j$, make the scalar product with $Je_j$ and sum over $j$. Using \eqref{j} we get
\beq\label{aj}\sum_jB_\mm(a_Je_j,Je_j)=-2\sum_{i,j}B_\hh([Je_i,e_j],[e_i,Je_j])=2\sum_{i,j}B_\hh([Je_i,e_j],[Je_i,e_j]).\eeq
If $a_J=0$ this equation would imply that the bracket vanishes on $\mm$, whence
$$0=B_\hh(a,[v,w])=B_\mm(av,w),\qquad \forall a\in\hh,\ \forall v,w\in\mm,$$
which is clearly impossible.
\r

The element $a_J$ defined in the proof above plays an important r\^ole in the theory of
Hermitian symmetric spaces. For now, let us remark that because of the irreducibility of $\mm$ and of the fact that
$a_J$ belongs to the center of $\hh$, there exists a non-zero constant $\mu$, depending on the choice of $B_\mm$, such that
\beq\label{aj1} a_Jv=\mu Jv, \qquad\forall v\in\mm,
\eeq
in other words $a_J$ acts like $\mu J$ on $\mm$.

Using the $\hh$-invariant scalar product $B_\mm$, the representation $\rho$ 
induces a Lie algebra morphism $\tilde\rho:\hh\to\L^2\mm\subset \L^{even}\mm$,
$a\mapsto\tilde\rho(a):=\tilde a$ where 
\beq\label{rho}
\tilde a(u,v)=B_\mm(au,v)=B_\hh(a,[u,v]).
\eeq

For later use, we recall that the induced 
Lie algebra action of $\hh$ on exterior 2-forms is $a_*(\tau)(u,v):=-\tau(au,v)-\tau(u,av)$,
for all $\tau\in\Lambda^2\mm$,
whence, in particular, the following formula holds:
\beq\label{ind} a_*\tilde b=\widetilde{[a,b]},\qquad\forall a,b\in\hh.
\eeq

The Lie algebra morphism $\tilde\rho$ extends to an
algebra morphism $\tilde\rho:\U(\hh)\to \L^{even}\mm$, where $\U(\hh)$
denotes the enveloping algebra of $\hh$. This morphism maps
the Casimir element $\Cas_\hh=\sum_k (a_k)^2$ of $\hh$ to an invariant element $\tilde\rho(\Cas_\hh)\in(\L^4\mm)^\hh$. 
It was remarked by Kostant \cite{ko56} and Heintze and Ziller \cite{hz} that this element is exactly the obstruction for 
$\mm$ to be an $s$-representation. We provide the proof of this fact below for the reader's convenience.

\begin{epr}[\cite{hz}, \cite{ko56}] \label{cas} 
An irreducible representation $(\mm,B_\mm)$ of $\hh$ is an $s$-representation if and only if
$\tilde\rho(\Cas_\hh)=0$.
\end{epr}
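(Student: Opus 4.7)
The plan is to exploit the constraints from Lemma~\ref{l1}: the bracket on $\gg = \hh \oplus \mm$ is already uniquely determined and anti-symmetric, so it defines a Lie algebra structure if and only if the Jacobi identity holds. I would first dispose of triples with at least one entry in $\hh$. The case of three elements in $\hh$ is the Jacobi identity of $\hh$ itself. For $J(a,b,v):=[[a,b],v]+[[b,v],a]+[[v,a],b]$ with $a,b\in\hh$, $v\in\mm$, property (2) reduces Jacobi to $[a,b]v = a(bv)-b(av)$, which is the statement that $\rho$ is a representation. For $J(a,u,v)$ with $a\in\hh$ and $u,v\in\mm$, the result lies in $\hh$, and pairing with an arbitrary $b\in\hh$ and applying (4) twice, together with the skew-symmetry of $\rho(a)$ and $\rho(b)$ with respect to $B_\mm$, shows that the three terms cancel.

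The only non-trivial case is $J(u,v,w) := [[u,v],w]+[[v,w],u]+[[w,u],v]\in\mm$ with $u,v,w\in\mm$. I would pair this with an arbitrary fourth vector $z\in\mm$ and apply (4) twice: first to rewrite $B_\mm([x,y]\cdot z',z'')$ as $B_\hh([x,y],[z',z''])$, and then to expand each bracket in the orthonormal basis $\{a_k\}$ of $\hh$ via $[x,y]=\sum_k B_\hh(a_k,[x,y])\,a_k = \sum_k \tilde a_k(x,y)\,a_k$. This yields
\begin{equation*}
B_\mm(J(u,v,w), z) = \sum_k \bigl[\tilde a_k(u,v)\tilde a_k(w,z) + \tilde a_k(v,w)\tilde a_k(u,z) + \tilde a_k(w,u)\tilde a_k(v,z)\bigr].
\end{equation*}
Using $\tilde a_k(w,u)=-\tilde a_k(u,w)$, the right-hand side is recognized as $\tfrac12\sum_k(\tilde a_k\wedge \tilde a_k)(u,v,w,z)$, i.e.\ as $\tfrac12\,\tilde\rho(\Cas_\hh)(u,v,w,z)$.

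By non-degeneracy of $B_\mm$, the Jacobiator $J(u,v,w)$ vanishes identically on $\mm$ if and only if the invariant $4$-form $\tilde\rho(\Cas_\hh)$ vanishes, which establishes the proposition. The only slightly delicate step is the combinatorial identification of the cyclic sum arising from the three Jacobi terms with the fully alternating wedge-square $\sum_k \tilde a_k\wedge \tilde a_k$, which requires careful tracking of the signs coming from the skew-symmetry of each $\tilde a_k$; once this identification is in place, both implications of the equivalence follow immediately without any further ingredient.
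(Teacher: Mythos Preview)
Your proof is correct and follows essentially the same route as the paper's own argument: reduce the Jacobi identity to the case of three entries in $\mm$, pair the Jacobiator with a fourth vector $z\in\mm$, use property~(4) to rewrite everything in terms of $B_\hh$, expand in the orthonormal basis $\{a_k\}$, and recognise the resulting cyclic sum as $\tfrac12\sum_k(\tilde a_k\wedge\tilde a_k)(u,v,w,z)$. The only difference is that you spell out explicitly why the mixed Jacobi cases (with one or two entries in $\hh$) are automatic, whereas the paper simply asserts this; the core computation is identical.
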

\bp We need to check that the Jacobi identity for the bracket defined in Lemma \ref{l1} on $\hh\oplus\mm$
is equivalent to the vanishing of $\tilde\rho(\Cas_\hh)$. Note that the Jacobi identity is 
automatically satisfied by $[.,.]$ whenever one of the three entries belongs to $\hh$.

We now take four arbitrary vectors $u,v,w,z\in\mm$ and compute the obstruction 
$$\mathcal{J}(u,v,w):=[[u,v],w]+ [[v,w],u]+ [[w,u],v]$$
using \eqref{rho} as follows:
\bea B_\mm(\mathcal{J}(u,v,w),z)&=&B_\mm([[u,v],w]+ [[v,w],u]+ [[w,u],v],z)\\
&=&B_\hh([u,v],[w,z])+B_\hh([v,w],[u,z])+B_\hh([w,u],[v,z])\\
&=&\sum_k\big(B_\hh(a_k,[u,v])B_\hh(a_k,[w,z])+B_\hh(a_k,[v,w])B_\hh(a_k,[u,z])\\
&&\qquad+B_\hh(a_k,[w,u])B_\hh(a_k,[v,z])\big)\\
&=&\sum_k(\tilde a_k(u,v)\tilde a_k(w,z)+\tilde a_k(v,w)\tilde a_k(u,z)+\tilde a_k(w,u)\tilde a_k(v,z))\\
&=&\frac12\sum_k(\tilde a_k\wedge\tilde a_k)(u,v,w,z)=\frac12\tilde\rho(\Cas_\hh)(u,v,w,z).
\eea

\r
The above result yields a simple criterion for $s$-representations:

\begin{ecor}\label{c1} If $(\L^4\mm)^\hh=0$, then $\mm$ is an $s$-representation.
\end{ecor}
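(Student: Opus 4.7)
The proof is essentially a one-line consequence of Proposition \ref{cas}, so the plan is almost tautological, but let me spell it out.

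My plan is to invoke Proposition \ref{cas}, which asserts that $\mm$ is an $s$-representation if and only if $\tilde\rho(\Cas_\hh) = 0$. So it suffices to argue that $\tilde\rho(\Cas_\hh) \in (\L^4\mm)^\hh$; combined with the hypothesis $(\L^4\mm)^\hh = 0$, this forces $\tilde\rho(\Cas_\hh) = 0$ and the conclusion follows.

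The only thing to verify is thus the $\hh$-invariance of $\tilde\rho(\Cas_\hh)$, but this was already noted in the paragraph preceding Proposition \ref{cas}. Conceptually it reflects the fact that $\Cas_\hh$ lies in the center of $\U(\hh)$, so its image under the algebra morphism $\tilde\rho:\U(\hh)\to \L^{even}\mm$ is invariant under the induced $\hh$-action on $\L^{even}\mm$. Alternatively, one can check this directly: using formula \eqref{ind}, one computes for any $b\in\hh$
\[
b_*\tilde\rho(\Cas_\hh) = \sum_k b_*(\tilde a_k\wedge\tilde a_k) = 2\sum_k \widetilde{[b,a_k]}\wedge\tilde a_k,
\]
and the right-hand side vanishes by the skew-symmetry of the structure constants $B_\hh([b,a_k],a_l)$ of $\hh$ in an orthonormal basis.

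There is no real obstacle here; the corollary is exactly the contrapositive-free restatement of one direction of Proposition \ref{cas} under the strongest possible hypothesis that kills every candidate obstruction at once.
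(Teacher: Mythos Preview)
Your proposal is correct and matches the paper's approach exactly: the corollary is stated immediately after Proposition~\ref{cas} without a separate proof, relying (as you do) on the fact---already noted in the paragraph preceding Proposition~\ref{cas}---that $\tilde\rho(\Cas_\hh)\in(\Lambda^4\mm)^\hh$. Your explicit verification of this invariance via \eqref{ind} is a bonus that the paper does not spell out.
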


Conversely, one could ask whether every $s$-representation arises in this way.
One readily sees that this is not the case, since the condition $(\L^4\mm)^\hh=0$ can only hold if
$\hh$ is simple and $\mm$ is a purely real representation
({\em cf.} Lemma \ref{tensor} below). Nevertheless, under these restrictions, the converse
to Corollary~\ref{c1} also holds, {\em cf.} Proposition \ref{rem} below.

\begin{elem}\label{tensor}
Let $\mm$ be an irreducible real representation of $(\hh,B_\hh)$ with $\dim_\RM(\mm)\ge4$. Then $(\L^4\mm)^\hh$
is non-zero if either $\mm$ has a complex structure or $\hh$ is not simple.

\end{elem}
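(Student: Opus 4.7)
The plan is to treat the two hypotheses separately and exhibit a non-zero $\hh$-invariant 4-form in each case.

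If $\mm$ carries an $\hh$-invariant complex structure $J$, combining it with the invariant metric produces the 2-form $\omega_J(u,v):=B_\mm(Ju,v)$, which lies in $(\Lambda^2\mm)^\hh$ by $\hh$-equivariance of $J$. Since $J$ is invertible, $\omega_J$ is non-degenerate, so its wedge powers are non-zero up to the top degree; in particular $\omega_J\wedge\omega_J$ is a non-zero element of $(\Lambda^4\mm)^\hh$ whenever $\dim_\RM\mm\geq 4$.

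For the second hypothesis, write $\hh=\hh_1\oplus\hh_2$ with both ideals non-zero. We may further assume that $\mm$ has no invariant complex structure (else the previous case applies), so $\End_\hh(\mm)=\RM$ by Schur's lemma and $\mm_\CM$ is complex $\hh$-irreducible. Hence $\mm_\CM\cong W_1\otimes_\CM W_2$ with $W_i$ a complex $\hh_i$-irreducible, and the existence of a real structure on this tensor product forces both $W_i$ to be of real type or both of quaternionic type, so each $W_i$ carries a non-zero $\hh_i$-invariant bilinear form $B_i$ (symmetric or skew respectively).

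The core argument then proceeds via Schur--Weyl duality: the rearrangement $\mm_\CM^{\otimes 4}\cong W_1^{\otimes 4}\otimes W_2^{\otimes 4}$ gives an $S_4$-equivariant isomorphism $(\mm_\CM^{\otimes 4})^\hh\cong(W_1^{\otimes 4})^{\hh_1}\otimes(W_2^{\otimes 4})^{\hh_2}$, and since $\Lambda^4\mm_\CM$ is the sign isotypic component under the $S_4$-action, it suffices to exhibit the sign representation in the right-hand side. Writing $V_\lambda$ for the $S_4$-irreducible associated to a partition $\lambda$ of $4$, each factor $(W_i^{\otimes 4})^{\hh_i}$ contains the two-dimensional irreducible $V_{(2,2)}$: the three pairings built from $B_i$ span the $S_4$-submodule $V_{(4)}\oplus V_{(2,2)}$ in the symmetric case and $V_{(1^4)}\oplus V_{(2,2)}$ in the skew case (collapsing to $V_{(2,2)}$ alone when $\dim_\CM W_i=2$ via a Plücker-type relation). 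Since $V_{(2,2)}\otimes V_{(2,2)}\cong V_{(4)}\oplus V_{(2,2)}\oplus V_{(1^4)}$, the sign representation $V_{(1^4)}$ appears in the tensor product with positive multiplicity, yielding a non-zero element of $(\Lambda^4\mm_\CM)^\hh$; this descends to $(\Lambda^4\mm)^\hh$ since taking $\hh$-invariants commutes with complexification for $\hh$ of compact type. The main obstacle is this Schur--Weyl verification---in particular, establishing the presence of the $V_{(2,2)}$ component in the quaternionic case, where small-dimensional coincidences introduce relations among the pairings that must be tracked with care.
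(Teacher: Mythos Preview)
Your treatment of the complex-structure case coincides with the paper's: form the K\"ahler 2-form $\omega(u,v)=B_\mm(Ju,v)$ and observe that $\omega\wedge\omega\neq 0$ once $\dim_\RM\mm\ge 4$. The paper inserts a one-line check that $J$ is orthogonal with respect to $B_\mm$ (so that $\omega$ is genuinely skew); you should include this, since an arbitrary invariant complex structure is not \emph{a priori} compatible with the given metric.

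For the non-simple case your route is genuinely different. The paper stays over $\RM$ throughout: it writes $\mm=\mm_1\otimes\mm_2$, identifies $\mm$ with $L(\mm_1,\mm_2)$ via the invariant scalar products, sets
\[
R(u,v,w,z)=\tr\bigl((uv^*-vu^*)(wz^*-zw^*)\bigr)\in\Sym^2(\Lambda^2\mm),
\]
takes $\Omega=\beta(R)$ under the Bianchi map, notes that $\Omega$ is even $\so(\mm_1)\oplus\so(\mm_2)$-invariant, and verifies non-vanishing by the single evaluation $\Omega(z_{11},z_{12},z_{21},z_{22})=2$ on elementary tensors $z_{ij}=x_i\otimes y_j$. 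This is short and completely explicit, but the clause ``$\mm=\mm_1\otimes\mm_2$'' is asserted without comment and is literally correct only when both tensor factors are of real type. Your Schur--Weyl argument trades that directness for structural clarity: by complexifying, splitting into the real--real and quaternionic--quaternionic cases, and locating a copy of $V_{(2,2)}$ inside each $(W_i^{\otimes 4})^{\hh_i}$ (via the three pairings built from the invariant form, with the Pl\"ucker relation collapsing the span to $V_{(2,2)}$ when $\dim_\CM W_i=2$), you obtain the sign component of $V_{(2,2)}\otimes V_{(2,2)}$ uniformly. The upshot is that your approach is less elementary but handles the quaternionic--quaternionic situation---where $\mm$ is \emph{not} a real tensor product of real irreducibles of the $\hh_i$---without any additional argument, whereas the paper's explicit $\Omega$ would need a separate justification there.
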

\bp If $J$ is a $\hh$-invariant complex structure on $\mm$, then $B_\mm(J.,J.)$
is a positive definite $\hh$-invariant scalar product on $\mm$ so by the irreducibility of
$\mm$ there is some positive constant $\nu$ such that $B_\mm(Ju,Jv)=\nu B_\mm(u,v)$
for every $u,v\in\mm$. Applying this relation to $Ju,Jv$ yields $\nu^2=1$, so $\nu=1$,
{\em i.e.} $J$ is orthogonal. The corresponding 2-form $\omega\in\Lambda^2\mm$
defined by 
\beq\label{o}\omega(u,v):=B_\mm(Ju,v)
\eeq
is $\hh$-invariant. Moreover, since $\dim_\RM(\mm)\ge4$, the four-form $\omega\wedge\omega$
is a non-zero element in $(\L^4\mm)^\hh$.

Assume now that $\hh=\hh_1\oplus\hh_2$ is not simple. Then $\mm=\mm_1\otimes\mm_2$
is the tensor product of irreducible representations $\mm_i$ of $\hh_i$. We endow
each $\mm_i$ with an $\hh_i$-invariant scalar product and identify $\mm$ with 
the representation $L(\mm_1,\mm_2)$ of linear maps between $\mm_1$ and $\mm_2$.
If $u\in L(\mm_1,\mm_2)$ we denote by $ u^*\in L(\mm_2,\mm_1)$ its adjoint. We now consider the
element $R\in\Sym^2(\Lambda^2\mm)$ given by 
$$R(u,v,w,z):=\tr((uv^*-vu^*)(wz^*-zw^*)),
$$
and the four-form $\Omega:=\b(R)$, where $\b$ is the Bianchi map $\b:\Sym^2(\Lambda^2(\mm))\to\Lambda^4(\mm)$ defined
by 
$$\b(T)(u,v,w,z):=T(u,v,w,z)+T(v,w,u,z)+T(w,u,v,z).$$
It is clear that $\Omega$ belongs to $(\L^4\mm)^\hh$ (it is actually $\so(\mm_1)\oplus\so(\mm_2)$-invariant).
To see that it is non-zero, take orthonormal bases $\{x_i\}$, $\{y_j\}$ of $\mm_1$ and $\mm_2$
and check that $\Omega(z_{11}, z_{12},z_{21},z_{22})=2$ for $z_{ij}:=x_i\otimes y_j$.
\r

In view of Lemma \ref{tensor}, it would be interesting to relax the condition 
$(\L^4\mm)^\hh=0$ in Corollary~\ref{c1} in order to obtain a criterion which could cover also
the cases of complex or quaternionic representations. Let us first
clarify the terminology. It is well-known that if
$\rho:\hh\to\End(\mm)$ is an irreducible 
$\RM$-representation of $\hh$, the centralizer of $\rho(\hh)$ in
$\End(\mm)$ is an algebra isomorphic to $\RM$, $\CM$ or
$\HM$. Correspondingly, we will say that $\mm$ has real, complex or
quaternionic type respectively.

Remark that if a real representation $\mm$ of a semi-simple Lie algebra $(\hh,B_\hh)$ of compact type
has a complex structure $I$, then it can not be an $s$-representation
by Lemma \ref{cs}. 
Nevertheless, it turns out that the natural extension of $\rho$
to $\hh\oplus\uu(1)$ defined on the generator $\ii\in\uu(1)$ by $\rho(\ii)=I\in\End(\mm)$
can be an $s$-representation provided the space of invariant four-forms on $\mm$
is one-dimensional. More precisely, we have the following:

\begin{epr} \label{complex} Let $\mm$ be a real representation of complex
  type of a semi-simple Lie algebra $(\hh,B_\hh)$ of compact type and consider
the representation of $\hh\oplus\uu(1)$ on $\mm$
  induced by the complex structure. 
If $\dim_\RM(\L^4\mm)^{\hh\oplus\uu(1)}=1$, then there exists a unique positive
  real number $r$ such that $\mm$ is an $s$-representation of $\hh\oplus
  \uu(1)$ with respect to the scalar product $B_\hh+rB_{\uu(1)}$. We denote here by
  $B_{\uu(1)}$ the scalar product on $\uu(1)$ satisfying $B_{\uu(1)}( \ii,\ii)=1$.
\end{epr}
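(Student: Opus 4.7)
The plan is to reduce everything via Proposition \ref{cas} to a single linear equation in the one-dimensional space $(\L^4\mm)^{\hh\oplus\uu(1)}$ and then solve it explicitly for $r$. My first step would be to pin down a distinguished generator of this space: the K\"ahler form $\omega$ defined by \eqref{o} is $\hh$-invariant by construction and $\uu(1)$-invariant since $I$ commutes with itself, and $\omega \wedge \omega$ is non-zero by the argument in the proof of Lemma \ref{tensor} (note $\dim_\RM\mm \ge 4$ because $\mm$ is a non-trivial irreducible representation of the semi-simple Lie algebra $\hh$). Under the hypothesis $\dim_\RM(\L^4\mm)^{\hh\oplus\uu(1)} = 1$, it follows that $(\L^4\mm)^{\hh\oplus\uu(1)} = \RM \cdot \omega \wedge \omega$.

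Next I would verify that $\tilde\rho(\Cas_\hh)$ also lies in this space: $\hh$-invariance is automatic, while $\uu(1)$-invariance follows from \eqref{ind} because $\ii$ commutes with every $a_k$, so the induced action of $\ii$ annihilates each $\tilde a_k$ and therefore also $\sum_k \tilde a_k \wedge \tilde a_k = \tilde\rho(\Cas_\hh)$. This yields a unique $c \in \RM$ with $\tilde\rho(\Cas_\hh) = c\,\omega \wedge \omega$. For the scaled metric, the set $\{a_k\} \cup \{\ii/\sqrt{r}\}$ is orthonormal in $(\hh\oplus\uu(1), B_\hh + rB_{\uu(1)})$, so $\Cas_{\hh\oplus\uu(1)} = \Cas_\hh + r^{-1}\ii^2$; since $\tilde\rho(\ii) = \omega$, the formula from the proof of Proposition \ref{cas} gives
\[
\tilde\rho(\Cas_{\hh\oplus\uu(1)}) \;=\; \Bigl(c + \frac{1}{r}\Bigr)\,\omega \wedge \omega.
\]
By Proposition \ref{cas}, this must vanish for $\mm$ to be an $s$-representation of $\hh\oplus\uu(1)$ with the scaled metric, which determines $r = -1/c$ uniquely, and positively exactly when $c < 0$.

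The only non-routine step, and the main obstacle, is showing $c < 0$. The plan here is a direct evaluation: pair both sides of $\tilde\rho(\Cas_\hh) = c\,\omega \wedge \omega$ against $(e_i, Ie_i, e_j, Ie_j)$ and sum over a unitary orthonormal basis $\{e_i, Ie_i\}_{i=1}^n$ of $\mm$. The right-hand side evaluates easily to $2n(n-1)\,c$. For the left-hand side, the ``diagonal'' contributions $\sum_i \tilde a_k(e_i, Ie_i) = -\tfrac12\tr_\mm(I\rho(a_k))$ vanish by semi-simplicity of $\hh$, and after using $[I,\rho(a_k)] = 0$ together with the skew-symmetry of $\rho(a_k)$, the remaining terms collapse to $\tr_\mm(\rho(\Cas_\hh))$, which is strictly negative because $\Cas_\hh$ acts on the non-trivial irreducible $\mm$ by a negative scalar. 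This forces $c < 0$ and yields the required unique positive $r$.
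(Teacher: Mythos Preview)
Your argument is correct and follows essentially the same route as the paper's proof: both identify $\omega\wedge\omega$ and $\tilde\rho(\Cas_\hh)$ as elements of the one-dimensional space $(\Lambda^4\mm)^{\hh\oplus\uu(1)}$, write $\tilde\rho(\Cas_\hh)=c\,\omega\wedge\omega$, and determine the sign of $c$ by contracting twice with $\omega$ (the paper phrases this via the operator $\lambda$ adjoint to $\omega\wedge\cdot$, which is exactly your sum over $(e_i,Ie_i,e_j,Ie_j)$), using $\tr(I\rho(a))=0$ from semi-simplicity to kill the diagonal term. Your final identification of the result with $\tr_\mm(\rho(\Cas_\hh))<0$ is just a repackaging of the paper's expression $-\tfrac12\sum_{i,k}|Ia_ke_i|^2$.
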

\bp For every $a,b\in\hh$ we have $\tr(abI)=0$ since $a$ is skew-symmetric and $bI$ is
symmetric as endomorphisms of $\mm$. Consequently
$\tr([a,b]I)=\tr(abI)-\tr(baI)=0$.
Since $\hh$ is semi-simple we have $[\hh,\hh]=\hh$, so $\tr(aI)=0$
for all $a\in\hh$. 

Let $\omega\in\Lambda^2\mm$ be the two-form corresponding to $I$ by \eqref{o}.
An orthonormal basis of $(\hh\oplus
  \uu(1),B_\hh+rB_{\uu(1)})$ is $\{a_k,\tfrac1{\sqrt r}\ii\}$. The element in $\Lambda^2\mm$
induced by $\ii$ being $\tilde\rho(\ii)=\omega$, the image of the Casimir element corresponding to 
$B_\hh+rB_{\uu(1)}$ in $\Lambda^4\mm$ is $\tilde\rho(\Cas_\hh)+\tfrac1r\omega\wedge\omega$.
Both summands are clearly $\hh$-invariant. To see that they are $\uu(1)$-invariant, note that by \eqref{ind}, both $\omega$
and the 2-forms $\tilde a\in\Lambda^2\mm$ for $a\in\hh$ are invariant under the induced action of
$\uu(1)$ on $\Lambda^2\mm$.
The hypothesis thus shows that there exists some real constant $c$ with
\beq\label{ca}\tilde\rho(\Cas_\hh)=
c\,\omega\wedge\omega.\eeq
It remains to show that $c$ is negative (since then one can apply Proposition \ref{cas} for $r=-1/c$).

Let $\lambda:\Lambda^k\mm\to\Lambda^{k-2}\mm$ denote the metric adjoint of 
the wedge product with $\omega$. It satisfies 
$$\lambda(\tau)=\tfrac12\sum_iIe_i\lrcorner e_i\lrcorner\tau$$
for every $\tau\in\Lambda^k\mm$, where $\lrcorner$ denotes the inner product.
Let $2n\ge 4$ be the real dimension of $\mm$. Then $\lambda(\omega)=n$ and $\lambda(\omega\wedge\omega)=(2n-2)\omega$.
In terms of $\lambda$, the relation $\tr(aI)=0$ obtained above just reads 
$\lambda(\tilde a)=0$ for all $a\in\hh$. We then get
$$\lambda(\tilde\rho(\Cas_\hh))=\lambda(\sum_k(\tilde a_k\wedge\tilde a_k))=\sum_{i,k}(a_ke_i\wedge a_kIe_i),$$
whence
$$\lambda^2(\tilde\rho(\Cas_\hh))=-\frac12\sum_{i,k}B_\mm(Ia_ke_i,Ia_ke_i).$$
From \eqref{ca} we thus find $c(2n^2-2n)=-\frac12\sum_{i,k}B_\mm(Ia_ke_i,Ia_ke_i)$, so $c$ is negative.\r

We consider now the quaternionic case. It turns out that a real representation $\mm$ of quaternionic
type is never an $s$-representation. Indeed, if $\mm$ is an $s$-representation, it follows from the proof of Lemma \ref{cs} that 
the three elements $a_I,\ a_J$ and $a_K$ defined from the quaternionic structure by \eqref{daj} belong to the center of $\hh$,
so in particular $a_I$ and $a_J$ commute.
On the other hand, \eqref{aj1} shows that  $a_I$ and $a_J$ anti-commute when acting on $\mm$.

However, like in the complex case, there are situations when one may turn $\mm$ into an $s$-representation by 
adding an extra summand $\sp(1)$ to $\hh$, and making it act on $\mm$ via the quaternionic structure.

\begin{epr} \label{quaternionic}  Let $\mm$ be a real representation of quaternionic
  type of a Lie algebra $(\hh,B_\hh)$ of compact type and consider
the representation of $\hh\oplus\sp(1)$ on $\mm$
  induced by the quaternionic structure. If 
  $\dim_\RM(\L^4\mm)^{\hh\oplus\sp(1)}=1$, then  there exists a unique positive 
  real number $r$ such that the induced representation of $(\hh\oplus
  \sp(1),B_\hh+rB_{\sp(1)})$ on $\mm$ is an $s$-representation, where
  $B_{\sp(1)}$ denotes the scalar product of $\sp(1)$ such that $\ii,\jj,\kk$ is an orthonormal basis.
\end{epr}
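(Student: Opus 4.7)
My plan is to mirror the argument of Proposition \ref{complex}, replacing the single complex structure $I$ by a triple $I,J,K$ generating the quaternionic centralizer of $\rho(\hh)$ in $\End(\mm)$. As in Lemma \ref{tensor}, the same orthogonality argument shows that we may choose $I,J,K$ to be $B_\mm$-orthogonal. Define the $\hh$-invariant 2-forms $\omega_\alpha(u,v):=B_\mm(\alpha u,v)$ for $\alpha\in\{I,J,K\}$, and set
$$\Omega:=\omega_I\wedge\omega_I+\omega_J\wedge\omega_J+\omega_K\wedge\omega_K,$$
the natural quaternionic analogue of $\omega\wedge\omega$. It is nonzero because $\dim_\RM\mm\ge4$, and the quaternionic relations give $\ii_*\omega_I=0$, $\ii_*\omega_J=2\omega_K$, $\ii_*\omega_K=-2\omega_J$, so the derivation property of $\ii_*$ yields $\ii_*\Omega=0$, and analogously $\jj_*\Omega=\kk_*\Omega=0$. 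Thus $\Omega\in(\Lambda^4\mm)^{\hh\oplus\sp(1)}$.

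Taking the orthonormal basis $\{a_k,r^{-1/2}\ii,r^{-1/2}\jj,r^{-1/2}\kk\}$ of $(\hh\oplus\sp(1),B_\hh+rB_{\sp(1)})$ and using $\tilde\rho(\ii)=\omega_I$ (and similarly for $\jj,\kk$), the image of the corresponding Casimir in $\Lambda^4\mm$ equals $\tilde\rho(\Cas_\hh)+r^{-1}\Omega$. Since $\hh$ and $\sp(1)$ commute in $\End(\mm)$, formula \eqref{ind} shows each $\tilde a_k$ is $\sp(1)$-invariant, hence so is $\tilde\rho(\Cas_\hh)$. By the one-dimensional hypothesis, there exists $c\in\RM$ with $\tilde\rho(\Cas_\hh)=c\,\Omega$, and by Proposition \ref{cas}, $\mm$ is an $s$-representation with respect to $B_\hh+rB_{\sp(1)}$ precisely when $r=-1/c$. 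Uniqueness of $r$ is then automatic, and it suffices to show $c<0$.

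To show $c<0$ I would adapt the second half of the proof of Proposition \ref{complex}. The crucial observation is that $\tr(aI)=0$ for every $a\in\hh$ \emph{even without semi-simplicity}: the identity $J^{-1}(aI)J=aJ^{-1}IJ=-aI$ (using that $a$ commutes with $J$ while $J$ anti-commutes with $I$) combined with cyclicity of trace forces $\tr(aI)=-\tr(aI)$; analogously $\tr(aJ)=\tr(aK)=0$. With $\lambda_\alpha$ denoting the adjoint of wedging with $\omega_\alpha$, these read $\lambda_\alpha(\tilde a)=0$ for $\alpha\in\{I,J,K\}$. Applying $\lambda_I^2$ to both sides of $\tilde\rho(\Cas_\hh)=c\,\Omega$, the left-hand side reproduces verbatim the complex computation and yields the manifestly negative quantity $-\tfrac12\sum_{i,k}|a_ke_i|^2$, while on the right-hand side the standard Lefschetz identity $\lambda_I(\omega_I^2)=2(2n-1)\omega_I$ (with $4n=\dim_\RM\mm$) combined with direct contractions $\lambda_I(\omega_J^2)=\lambda_I(\omega_K^2)=2\omega_I$ (obtained from $\sum_i(Je_i)\wedge(Ke_i)=2\omega_I$ and its cyclic analogues) gives $\lambda_I^2(\Omega)=4n(2n+1)>0$. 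Comparing signs forces $c<0$, which yields the unique positive $r=-1/c$.

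The main obstacle I anticipate is the bookkeeping of the ``off-diagonal'' contributions of $\omega_J^2$ and $\omega_K^2$ to $\lambda_I^2(\Omega)$: these 2-forms are $I$-primitive but not of the form $L_I(\text{primitive})$, so standard Lefschetz theory does not apply term by term, and one must compute $\lambda_I(\omega_J^2)$ by hand via the $(\cdot)\lrcorner(\cdot)\lrcorner$-expansion. Fortunately the quaternionic identities conspire so that every contribution to $\lambda_I^2(\Omega)$ is manifestly non-negative, and the conclusion $c<0$ follows cleanly.
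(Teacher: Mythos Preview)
Your proof is correct and follows essentially the same approach as the paper's: express the Casimir image as $\tilde\rho(\Cas_\hh)+r^{-1}\Omega$, use the one-dimensionality hypothesis to write $\tilde\rho(\Cas_\hh)=c\,\Omega$, and determine the sign of $c$ by applying $\lambda_I^2$ to both sides (the paper's computation, with $2n=\dim_\RM\mm$, gives $c(2n^2+2n)=-\tfrac12\sum_{i,k}|Ia_ke_i|^2$, which matches your $4n(2n+1)$ after the change of dimension convention). Your conjugation argument $J^{-1}(aI)J=-aI$ for $\tr(aI)=0$ is a worthwhile addition, since the paper simply imports this fact ``from the computations in the complex case'' where it was proved via semi-simplicity of $\hh$, a hypothesis not assumed in the quaternionic statement.
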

\bp Let $\omega_I$, $\omega_J$ and $\omega_K$ denote the elements in $\Lambda^2\mm$
induced by the quaternionic structure $\{\ii,\jj,\kk\}$ via \eqref{o}.
Like before, the image of the Casimir element corresponding to 
$B_\hh+rB_{\sp(1)}$ in $\Lambda^4\mm$ is 
$$\tilde\rho(\Cas_\hh)+\tfrac1r(\omega_I\wedge\omega_I+\omega_J\wedge\omega_J+\omega_K\wedge\omega_K).$$
Both terms are clearly $\hh$-invariant by \eqref{ind}. To see that they are $\sp(1)$-invariant, we use \eqref{ind} again
to see that the induced action of $\sp(1)$ on $\Lambda^2\mm$ satisfies
$$\ii_*(\tilde a)=0\qquad \forall a\in\hh\qquad\hbox{and}\qquad \ii_*(\omega_I)=0,\  \ii_*(\omega_J)=2\omega_K,\ \ii_*(\omega_K)=-2\omega_J,$$
whence $\ii_*(\tilde\rho(\Cas_\hh))=0$ and $\ii_*(\omega_I\wedge\omega_I+\omega_J\wedge\omega_J+\omega_K\wedge\omega_K)=
4\omega_J\wedge\omega_K-4\omega_K\wedge\omega_J=0.$ The invariance with respect to $\jj_*$ and $\kk_*$ can be
proved in the same way.
The hypothesis thus shows that there exists some real constant $c$ with
\beq\label{ca1}\tilde\rho(\Cas_\hh)=c(\omega_I\wedge\omega_I+\omega_J\wedge\omega_J+\omega_K\wedge\omega_K),\eeq
and again it remains to show that $c$ is negative.

Let $\lambda_\ii:\Lambda^k\mm\to\Lambda^{k-2}\mm$ denote the metric adjoint of 
the wedge product with $\omega_I$. From the computations in the complex case we have 
$$\lambda_\ii^2(\tilde\rho(\Cas_\hh))=-\frac12\sum_{i,k}B_\mm(Ia_ke_i,Ia_ke_i)\qquad\hbox{and}\qquad 
\lambda_\ii^2(\omega_I\wedge\omega_I)=2n(n-1),$$
where $2n$ denotes the real dimension of $\mm$. An easy computation gives 
$$\lambda_\ii^2(\omega_J\wedge\omega_J)=\lambda_\ii^2(\omega_K\wedge\omega_K)=2n,$$
so from \eqref{ca1} we get $c(2n^2+2n)=-\frac12\sum_{i,k}B_\mm(Ia_ke_i,Ia_ke_i)$, showing that $c$ is negative.\r

We can summarize Corollary \ref{c1} and Propositions \ref{complex}, \ref{quaternionic} by saying that a certain 
condition on the invariant part
of $\Lambda^4 \mm$ is sufficient for the existence of an $s$-representation on $\mm$. Conversely one might
ask whether this condition is also necessary for a given $s$-representation. It turns out that this is always
the case if $\hh$ is simple. More precisely, we have:

\begin{epr}\label{rem}
Let $(\hh,B_\hh)$ be a simple Lie algebra of compact type and $\mm$ an irreducible 
representation of $\hh$  over $\RM$.
\begin{enumerate}
\item
If $\mm$ is an $s$-representation representation
  of  $\hh$, then $(\L^4\mm)^\hh=0$.
\item If $\mm$ has complex type and is an
  $s$-representation 
  of $(\hh\oplus \uu(1),B_\hh+rB_{\uu(1)})$ for some positive real number
  $r$, then $\dim_\RM(\L^4\mm)^{\hh \oplus \uu(1)}=1$. 
\item If $\mm$ has quaternionic type and is
  an $s$-representation 
  of $(\hh\oplus \sp(1),B_\hh+rB_{\sp(1)})$ for some positive real number
  $r$, then $\dim_\RM(\L^4\mm)^{\hh\oplus\sp(1)}=1$. 
\end{enumerate}
\end{epr}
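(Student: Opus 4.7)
The plan is to reformulate the statement as a computation of $H^4(G/K;\RM)$ for the compact symmetric space attached to the $s$-representation. Denote by $\hh'$ the acting Lie algebra: it is $\hh$ in part (1), $\hh\oplus\uu(1)$ in part (2), and $\hh\oplus\sp(1)$ in part (3). Since $\mm$ is an $s$-representation of $(\hh',B_{\hh'})$, the augmented Lie algebra $\gg=\hh'\oplus\mm$ is the Lie algebra of a compact symmetric pair $(G,K)$ with $K$ having Lie algebra $\hh'$. Identifying $\Lambda^k\mm\cong\Lambda^k\mm^*$ via $B_\mm$, the space $(\Lambda^k\mm)^{\hh'}$ coincides with the space of $G$-invariant $k$-forms on $G/K$; since on a compact symmetric space every invariant form is harmonic (Cartan), this space is isomorphic to $H^k(G/K;\RM)$. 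The proposition thus reduces to checking that $b_4(G/K)$ equals $0$, $1$, or $1$ in the three cases respectively.

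For part (1), either $\gg$ is of group type $\hh\oplus\hh$---in which case $\mm\cong\hh$ as $\hh$-modules and the vanishing $(\Lambda^4\hh)^\hh=0$ holds because the $\Ad$-invariants in $\Lambda^*\hh$ for a simple compact $\hh$ form an exterior algebra on odd-degree primitive classes of degree at least three, none of which combine into a degree-four invariant---or $\gg$ is simple and $(\gg,\hh)$ belongs to the short Cartan list of irreducible symmetric pairs with both factors simple and $\mm$ of real type: $\SO(n+1)/\SO(n)$ for $n\ne 4$, $\SU(n)/\SO(n)$ for $n\ne 4$, $\SU(2n)/\Sp(n)$, $\F_4/\Spin(9)$, $\E_6/\F_4$, $\E_6/\Sp(4)$, $\E_7/\SU(8)$, and $\E_8/\Spin(16)$; direct inspection of the Poincar\'e polynomials of these spaces gives $b_4=0$ throughout. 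For parts (2) and (3) one enumerates analogously the irreducible Hermitian, respectively quaternion-K\"ahler, symmetric spaces whose stabilizer is simple modulo the prescribed $\U(1)$- or $\Sp(1)$-factor, and verifies $b_4=1$ in every case; the generator is the square $[\omega\wedge\omega]$ of the K\"ahler class in the Hermitian family and the class of the fundamental four-form $\omega_I\wedge\omega_I+\omega_J\wedge\omega_J+\omega_K\wedge\omega_K$ in the quaternion-K\"ahler family.

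The main obstacle is that this argument relies on the classification of symmetric pairs together with tabulated cohomological data, so the conceptual content is small. A more intrinsic route would be to show directly that $(\Lambda^4\mm)^{\hh'}$ is spanned by the Casimir image $\tilde\rho(\Cas_{\hh'})$ together with the universal invariants forced by the real, complex, or quaternionic structure of $\mm$; combined with Proposition~\ref{cas} this would immediately yield the required dimensions. The difficult step in that approach would be to exclude further $\hh'$-invariants coming from unexpected multiplicities in the decomposition of $\Lambda^2\mm$, and in practice this exclusion itself seems to require the structural input most readily supplied by the classification.
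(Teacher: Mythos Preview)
Your proposal is correct and follows essentially the same approach as the paper: identify $(\Lambda^4\mm)^{\hh'}$ with $H^4(G/K;\RM)$ via the Cartan isomorphism between invariant forms and harmonic forms on a compact symmetric space, and then read off $b_4$ from the known Poincar\'e polynomials of the relevant irreducible symmetric spaces (real-type isotropy, Hermitian, and Wolf spaces respectively). Your write-up is slightly more detailed than the paper's---you separate out the group-type case and give the explicit Cartan list---but the argument is the same, and your closing paragraph correctly anticipates the paper's own acknowledgment that this route trades conceptual insight for reliance on classification.
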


\bp The statement of (1) is already contained in \cite{ko56}. For the proof one has to use the well-known fact that
the dimension of $(\Lambda^4 \mm)^\hh$ is just the fourth Betti number of the corresponding
symmetric space $G/H$. Now, since in the case of compact symmetric spaces the Poincar\'e polynomials
are known explicitly, it is easy to check that $\bb_4(G/H)$ vanishes. The proof in the cases (2) and (3) is similar, and is
based on the computation of the fourth Betti numbers of Hermitian symmetric spaces and Wolf spaces.
\r

\section{Applications to complex representations}

In this section we will give some applications of our characterization of $s$-representations in
order to classify complex representations whose exterior powers have certain irreducibility 
properties. From now on $\mm$ will denote a {\em complex} irreducible representation of some
Lie algebra $\hh$ of compact type. We will study three instances, corresponding to the cases where 
$\mm$ has a real structure, $\mm$ is purely complex, or $\mm$ has a quaternionic structure.

\subsection{Representations with a real structure} Our main result in this case is the classification
of all complex representations $\mm$ with real structure whose fourth exterior power has no trivial summand.
Let $\llbracket \mm\rrbracket$ denote the real part of $\mm$, {\em i.e.} the 
fix point set of the real structure. Then $\llbracket \mm\rrbracket$ is a real representation of $\hh$ and $\mm=\llbracket 
\mm\rrbracket\otimes_\RM\CM$.

\begin{ath}\label{l4}
Let $\mm$ be a complex irreducible faithful representation with real structure of a Lie algebra $\hh$ of compact type such that 
$(\Lambda^4 \mm)^\hh = 0$. Then $\hh$ is simple and the pair $(\hh, \llbracket \mm\rrbracket)$ belongs to the following list:
\begin{center}
\begin{tabular}{|l|l|l|}
\hline
Helgason's type &  $\hh$   &     $\llbracket \mm\rrbracket$   \\
\hline \hline
& $\hh$   &      $\hh$    \\
\hline
\rm{BD I} & $\so(n), n \neq 4$ & $\RM^n$  \\
\hline
\rm{A I} & $\so(n), n \neq 4$ & $\Sym^2_0 \, \RM^n$  \\
\hline
\rm{A II} & $\sp(n)$ & $\Lambda^2_0\,  \HM^n$  \\
\hline
\rm{F II} & $\spin(9)$ & $\Sigma_9$  \\
\hline
\rm{E I} & $\sp(4)$ & $\Lambda^4_0 \, \HM^4$  \\
\hline
\rm{E IV} & $F_4$     &   $V_{26}$  \\
\hline
\rm{E V} & $\su(8)$ &  $\llbracket\Lambda^4 \CM^8\rrbracket$   \\
\hline
\rm{E VIII} & $\spin(16)$ &  $\Sigma^+_{16}$  \\
\hline
\end{tabular}
\end{center}
where $V_{26}$ denotes the real $26$-dimensional irreducible representation of $F_4$ and
$\llbracket \mm\rrbracket = \hh$ in the first row denotes the adjoint representation of $\hh$.
\end{ath}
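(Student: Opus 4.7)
The strategy is to reduce the classification to the known list of compact irreducible symmetric pairs with simple isotropy. Since $\mm=\llbracket\mm\rrbracket\otimes_\RM\CM$, the hypothesis $(\L^4\mm)^\hh=0$ is equivalent to $(\L^4\llbracket\mm\rrbracket)^\hh=0$, and because $\mm$ is complex-irreducible with real structure, $\llbracket\mm\rrbracket$ is real-irreducible of real type. Corollary~\ref{c1} then gives that $\llbracket\mm\rrbracket$ is an $s$-representation of $\hh$, while the absence of any $\hh$-invariant complex structure on $\llbracket\mm\rrbracket$ together with Lemma~\ref{tensor} forces $\hh$ to be simple.

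Setting $\gg:=\hh\oplus\llbracket\mm\rrbracket$ with the bracket from Lemma~\ref{l1} and the involution $\sigma=\id_\hh-\id_{\llbracket\mm\rrbracket}$ produces an irreducible compact symmetric pair $(\gg,\hh,\sigma)$ whose isotropy $\llbracket\mm\rrbracket$ is of real type. Two cases then arise: if $\gg$ is not simple, the irreducibility of $\llbracket\mm\rrbracket$ forces $\gg=\hh\oplus\hh$ with $\hh$ embedded diagonally and $\llbracket\mm\rrbracket\cong\hh$ via the adjoint representation, yielding the first row of the table. If $\gg$ is simple, I would invoke Helgason's classification of compact irreducible symmetric pairs and keep only those with $\hh$ simple. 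Simplicity of $\hh$ automatically excludes Hermitian and quaternion-K\"ahler pairs (whose isotropies contain a $\uu(1)$- or $\sp(1)$-summand), so what remains are precisely the Helgason types BD I, A I, A II, F II, E I, E IV, E V and E VIII listed in the table.

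Finally, to ensure that no entry is spurious, I would verify that each listed pair actually satisfies $(\L^4\mm)^\hh=0$. By Proposition~\ref{rem}\,(1), this amounts to checking $\bb_4(G/H)=0$, which can be read off directly from the known Poincar\'e polynomials of these symmetric spaces. The check succeeds in all cases except $\SO(5)/\SO(4)=S^4$, where $\bb_4=1$; this accounts for the restriction $n\neq 4$ in row BD I, and that case is in any event excluded by the simplicity of $\hh$, since $\so(4)$ is not simple. The main obstacle in the argument is the appeal to Helgason's classification, used as a black box; everything else reduces to routine filtering by simplicity of $\hh$ and vanishing of the fourth Betti number.
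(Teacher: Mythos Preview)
Your proposal is correct and follows essentially the same route as the paper: reduce to $(\L^4\llbracket\mm\rrbracket)^\hh=0$, invoke Lemma~\ref{tensor} for simplicity of $\hh$, apply Corollary~\ref{c1} to get an $s$-representation, read off the list from the classification of irreducible compact symmetric spaces with simple isotropy, and use Proposition~\ref{rem} for the converse. Your write-up is in fact slightly more explicit than the paper's in separating out the Type~II case ($\gg$ not simple) and in explaining why the $n\neq 4$ restriction arises; the paper simply refers to Besse's tables for the filtering step.
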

\begin{proof} Since $(\Lambda^4 \mm)^\hh=(\Lambda^4_\RM\llbracket \mm\rrbracket)^\hh\otimes\CM$, the
hypothesis implies that $(\Lambda^4_\RM\llbracket \mm\rrbracket)^\hh=0$.
From Lemma~\ref{tensor} it follows that $\hh$ has to be simple, and Corollary~\ref{c1} 
implies that $\llbracket \mm\rrbracket$ is an $s$-representation
and thus $\llbracket \mm\rrbracket$ is  the isotropy representation of an irreducible symmetric space $G/H$ of compact type
with $H$ simple.

The list of possible pairs $(\hh, \llbracket \mm\rrbracket)$ then follows from the list of irreducible symmetric spaces of compact type
\cite[p. 312-314]{besse}. Here the adjoint representation on $\llbracket \mm\rrbracket = \hh$ corresponds to
the isotropy representation on symmetric spaces of the type II, {\em i.e.} of the form $(H\times H)/ H$.

Conversely, the fourth exterior power of the representations in the table above have no invariant elements 
by Proposition \ref{rem}. In some cases 
a direct proof can also be given, see Proposition \ref{41} below.
\end{proof}

\begin{ere}\label{outer}
We will see later on in Proposition \ref{spin} that the
real half-spin representation $\Sigma_8^+$ is also an $s$-representation, and has no invariant elements in 
its fourth exterior power ({\em cf.} also Proposition \ref{rem}). One may thus wonder why it does not 
appear in the above table. The explanation is that it actually appears in a disguised form, as the standard
representation of $\so(8)$ on $\RM^8$. To make this more precise, note that $n$-dimensional representations 
are usually classified up to isomorphism,
{\em i.e.} up to composition with some element in the inner automorphism group of $\so(n)$. On the other hand, 
if one wants to classify all pairs $(\hh,\mm)$ with $(\Lambda^4\mm)^\hh=0$, then there is another group acting 
on the space of solutions: the outer automorphism group of $\so(n)$.
Our classification above is up to the action of this group.
In particular, the triality phenomenon in dimension 8 can be interpreted by saying that the outer automorphism 
group of $\so(8)$ is isomorphic to the permutation group of the set $\{\RM^8,\Sigma_8^+,\Sigma_8^-\}$ of 
8-dimensional representations of $\so(8)\cong\spin(8)$. The same remark also applies below to complex representations, 
where taking the conjugate of a representation can be viewed as composing it with the non-trivial outer automorphism of $\su(n)$. 
\end{ere}

\subsection{Representations with irreducible second exterior power} As another application of the above ideas, we will
now obtain in a simple geometrical way Dynkin's classification \cite[Thm. 4.7]{dynkin} of complex
representations $\mm$ with $\Lambda^2\mm$ irreducible.

\begin{ath}\label{com}
Let $\mm$ be a complex irreducible faithful representation of a Lie algebra $\hh$ of compact type such that
$\Lambda^2 \mm$ is irreducible. Then either $\hh = \hh_0$ is simple, or
$\hh = \hh_0 \oplus \uu(1)$ with $\hh_0$ simple, and the pair $(\hh_0, \mm)$ belongs to the following list:

\begin{center}
\begin{tabular}{|l|l|l|}
\hline
Helgason's type & $\hh_0$   &     $\mm$  \\
\hline \hline
\rm{A III} & $\su(n)$  &      $\CM^n$   \\
\hline
\rm{D III} & $\su(n)$  &     $ \Lambda^2\CM^n$    \\
\hline
\rm{C I} &  $\su(n)$  &     $\Sym^2 \CM^n$    \\
\hline
\rm{BD I} & $\so(n), n \neq 4$ &  $\RM^n \otimes \CM$  \\
\hline
\rm{E III} & $\spin(10)$ & $\Sigma_{10}$  \\
\hline
\rm{E VII} & $\E_6$ & $V_{27}$  \\
\hline
\end{tabular}
\end{center}
where $V_{27}$ denotes the $27$-dimensional irreducible representation of $\E_6$.
\end{ath}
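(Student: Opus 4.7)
The strategy is to reduce the problem to an $s$-representation via Proposition~\ref{complex} and then invoke the classification of Hermitian symmetric spaces of compact type. Since $\mm$ is complex irreducible, Schur forces the center $\mathfrak{z}\subset\hh$ to act on $\mm$ by purely imaginary scalar multiples of $\id_\mm$, so $\dim_\RM\mathfrak{z}\le 1$; writing $\hh=\hh_0\oplus\mathfrak{z}$ with $\hh_0$ semi-simple, we have either $\hh=\hh_0$ or $\hh=\hh_0\oplus\uu(1)$, with the extra summand acting as a multiple of the complex structure $J$.

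I then split according to the real type of $\mm$. If $\mm$ has a real structure, then $\Lambda^2\mm=\Lambda^2\llbracket\mm\rrbracket\otimes\CM$, so the hypothesis forces $\Lambda^2\llbracket\mm\rrbracket$ to be irreducible as a real $\hh$-module; the faithful inclusion $\hh\hookrightarrow\so(\llbracket\mm\rrbracket)\cong\Lambda^2\llbracket\mm\rrbracket$ of $\hh$-submodules then gives $\hh=\so(n)$ with $\mm=\RM^n\otimes\CM$ and $n\ne 4$, yielding the row BD~I. If $\mm$ has quaternionic type, the invariant complex symplectic form produces $\CM\hookrightarrow\Lambda^2\mm$, so irreducibility forces $\dim_\CM\mm=2$, giving $\hh=\su(2)$ and $\mm=\CM^2$, which falls under A~III.

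The main case is $\mm$ of complex type; then $\mm_\RM$ is irreducible real of complex type and I would apply Proposition~\ref{complex} to $(\hh_0,\mm_\RM)$. To verify its hypothesis, I complexify: $\Lambda^4_\RM\mm_\RM\otimes\CM=\bigoplus_{k=0}^{4}\Lambda^k\mm\otimes\Lambda^{4-k}\overline{\mm}$, on which $\uu(1)$ acts with weight $2k-4$, so only the $k=2$ summand $\Lambda^2\mm\otimes\Lambda^2\overline{\mm}\cong\End_\CM(\Lambda^2\mm)$ survives; irreducibility of $\Lambda^2\mm$ gives $\End_\hh(\Lambda^2\mm)=\CM$, so $\dim_\RM(\Lambda^4_\RM\mm_\RM)^{\hh_0\oplus\uu(1)}=1$. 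Proposition~\ref{complex} then realizes $\mm_\RM$ as the isotropy representation of an irreducible Hermitian symmetric space of compact type.

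Finally, I match this against the classification: the Grassmannian $\SU(p+q)/\S(\U(p)\times\U(q))$ yields $\Lambda^2(\CM^p\otimes\CM^q)=\Lambda^2\CM^p\otimes\Sym^2\CM^q\oplus\Sym^2\CM^p\otimes\Lambda^2\CM^q$, irreducible only when $\min(p,q)=1$, which produces the A~III row with $\hh_0=\su(n)$; the remaining four irreducible Hermitian symmetric spaces have $\hh_0$ simple and furnish the C~I, D~III, E~III, and E~VII rows. The main technical obstacle will be verifying irreducibility of $\Lambda^2\mm$ in the exceptional entries $\Sigma_{10}$ and $V_{27}$, which can be checked via branching rules or the character data underlying the classification.
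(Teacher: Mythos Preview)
Your strategy is the same as the paper's: compute $(\Lambda^4_\RM\mm^\RM)^{\hh_0\oplus\uu(1)}$ via the $(p,q)$-decomposition, apply Proposition~\ref{complex}, and read off the answer from the list of irreducible Hermitian symmetric spaces. Two points deserve comment.

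First, the paper dispenses with your case split by real type. It argues directly that if $\hh$ is not simple then $\mm=E\otimes F$ and $\Lambda^2(E\otimes F)=(\Lambda^2E\otimes\Sym^2F)\oplus(\Sym^2E\otimes\Lambda^2F)$, so one factor must be one-dimensional; iterating shows $\hh_0$ is \emph{simple} (not just semi-simple) before invoking the classification. Your route---leaving $\hh_0$ semi-simple and eliminating the higher Grassmannians $\su(p)\oplus\su(q)$ afterwards---also works, but the paper's argument is cleaner and makes the uniform application of Proposition~\ref{complex} unproblematic: one does not need to worry separately about whether $\mm_\RM$ is irreducible of complex type over $\hh_0$, since the hypotheses of that proposition (semi-simplicity, the invariant complex structure, and the one-dimensional invariant space) are all that its proof actually uses.

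Second, there is a bookkeeping slip in your final matching step. After the Grassmannians there are \emph{five} remaining irreducible Hermitian symmetric families, not four: you have omitted $\SO(n+2)/\SO(n)\times\SO(2)$, which is precisely the BD~I row. Your separate ``real type'' branch catches this row when $\hh=\so(n)$, but when $\hh=\so(n)\oplus\uu(1)$ the representation $\mm=\RM^n\otimes\CM$ is of complex type as an $\hh$-module and must be recovered from the Hermitian list. Adding BD~I back to your enumeration closes the gap.
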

\begin{proof} 
If $\hh$ is not simple, it may be written as the sum
of two ideals, $\hh = \hh_0 \oplus \hh_1$, and $\mm$ is the tensor product
representation $\mm = E \otimes F$. It follows that
$$
\Lambda^2 \mm = \Lambda^2 (E \otimes F) 
\cong 
(\Lambda^2 E \otimes \Sym^2F)\,    \oplus  \, (\Sym^2 E \otimes \Lambda^2 F).
$$
Hence $\Lambda^2 \mm$ can only be irreducible if one factor, say $F$, is
one-dimensional. Since $F$ is a faithful representation, one must have $\hh_1 = \uu(1)$. 
This argument shows that every ideal of $\hh$ has either dimension or co-dimension at most one.
In particular, $\hh_0$ is simple.

Consider now the real representation $\mm^\RM$ of $\hh_0$ obtained by forgetting the complex multiplication in $\mm$. 
The Lie algebra $\uu(1)$ acts on the fourth exterior power $\Lambda^4_\RM \mm^\RM$ by extending the action of the complex structure $J$
from $\mm^\RM$.
We claim that 
the space of invariant elements $ (\Lambda_\RM^4 \mm^\RM)^{\hh_0 \oplus \uu(1)}$ is one-dimensional. 
If we denote as usual by $\Lambda^{p,q}\mm:=\Lambda^p\mm\otimes\Lambda^q\bar \mm$, then
$$
\Lambda_\RM^4 \mm^\RM = 
\llbracket \Lambda^{4,0} \mm \rrbracket  \oplus 
\llbracket \Lambda^{3,1} \mm \rrbracket \oplus
\llbracket \Lambda^{2,2} \mm \rrbracket.
$$ 
Since $J^2$ acts as $-(p-q)^2\id$ on 
$\llbracket \Lambda^{p,q}\mm \rrbracket$, it follows that the  
$\uu(1)$-invariant part of $\Lambda^4_\RM \mm^\RM$ is the third summand $\llbracket \Lambda^{2,2} \mm \rrbracket 
= \llbracket    \Lambda^2 \mm \otimes \Lambda^2 \bar{ \mm}\rrbracket
= \llbracket \End(\Lambda^2 \mm) \rrbracket$. Consequently, 
\beq\label{u1} (\Lambda_\RM^4 \mm^\RM)^{\hh_0 \oplus \uu(1)}
= \llbracket \End(\Lambda^2 \mm) \rrbracket^{\hh_0}=\llbracket (\End(\Lambda^2 \mm))^{\hh_0} \rrbracket\eeq
is one-dimensional since by assumption $\Lambda^2 \mm$
is irreducible as representation of $\hh$, so also of $\hh_0$. We can therefore 
apply Proposition~\ref{complex} to realize $\mm^\RM$ as an $s$-representation of $\hh_0\oplus\uu(1)$, 
so $\mm^\RM$ is the isotropy representation of some Hermitian symmetric space. Checking again the list in \cite[pp. 312-314]{besse} we
obtain the possible pairs $(\hh_0, \mm)$ as stated above.

Conversely, if $(\hh_0,\mm)$ belongs to the above list, then $ (\Lambda_\RM^4 \mm^\RM)^{\hh_0 \oplus \uu(1)}$ is one-dimensional 
by Proposition \ref{rem}, thus \eqref{u1} shows that $\Lambda^2 \mm$ is irreducible.
\end{proof}

\subsection{Representations with quaternionic structure}
As another application we will now consider complex representations $\mm$
of $\hh$ with quaternionic structure. Such representations can be characterized by the existence of an invariant 
element in $\Lambda^2 \mm$, which is therefore never irreducible. Considering the $\hh$-invariant
decomposition $\Lambda^2 \mm = \Lambda^2_0\mm \oplus \CM$, one can nevertheless ask whether $\Lambda^2_0 \mm$
can be irreducible. The classification of such representations is given by the following:

\begin{ath}\label{33}
Let $\mm$ be a complex irreducible faithful representation of a Lie algebra $\hh$ of compact type with 
a quaternionic structure, and let
$\Lambda^2\mm = \Lambda^2_0\mm \oplus \CM$ be the standard decomposition of the second exterior
power of $\mm$. If  the $\hh$-representation
$\Lambda^2_0\mm$ is irreducible then $\hh$ is simple and the pair
$(\hh, \mm)$ belongs to the following list:
\begin{center}
\begin{tabular}{|l|l|l|}
\hline
Helgason's type & $\hh$   &     $\mm$   \\
\hline \hline
\rm{C II} & $\sp(n)$ & $\HM^n$  \\
\hline
\rm{F I} & $\sp(3)$   &      $ \Lambda^3_0 \HM^3$    \\
\hline
\rm{G I} & $\sp(1)$ &  $\Sym^3 \HM$  \\
\hline
\rm{E II} & $\su(6)$ & $\Lambda^3 \CM^6$  \\
\hline
\rm{E VI} & $\spin(12)$ & $\Sigma_{12}^+$  \\
\hline
\rm{E IX} & $\E_7$ & $V_{56}$  \\
\hline
\end{tabular}
\end{center}
where $V_{56}$ is the $56$-dimensional irreducible representation of $\E_7$.
\end{ath}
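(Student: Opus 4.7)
The plan is to mimic the proof of Theorem~\ref{com}, replacing $\uu(1)$ by $\sp(1)$ and Proposition~\ref{complex} by Proposition~\ref{quaternionic}. First I would show that $\hh$ is simple: if $\hh=\hh_0\oplus\hh_1$ nontrivially, then $\mm = E\otimes F$, and a central $\uu(1)$-summand is ruled out because the antilinear $\hh$-equivariant map $J$ defining the quaternionic structure satisfies $J e^{i\theta} = e^{-i\theta}J$, forcing a central $\uu(1)$ to act trivially. So $\hh_0,\hh_1$ are both semisimple and $\dim E,\dim F\ge 2$. Then $\Lambda^2\mm = (\Lambda^2 E\otimes\Sym^2 F)\oplus(\Sym^2 E\otimes\Lambda^2 F)$ decomposes into two tensor-product pieces, and matching with the prescribed form $\CM\oplus\Lambda^2_0\mm$ forces one piece to be the trivial $1$-dimensional representation, which in turn forces one of $\dim E$, $\dim F$ to equal~$1$, a contradiction.

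Next I would apply Proposition~\ref{quaternionic} by showing $\dim_\RM(\Lambda^4_\RM\mm^\RM)^{\hh\oplus\sp(1)} = 1$. The quaternionic structure provides an $\hh\oplus\sp(1)$-equivariant isomorphism $\mm^\RM\otimes_\RM\CM \cong \mm\otimes_\CM\CM^2$, where $\CM^2$ carries the standard $\sp(1)$-representation. The Cauchy-type decomposition $\Lambda^4_\CM(\mm\otimes\CM^2) = \bigoplus_{\lambda\vdash 4}S_\lambda\mm\otimes S_{\lambda'}\CM^2$ retains only partitions $\lambda$ with parts at most $2$, namely $(2,2), (2,1,1), (1^4)$, and among the corresponding $S_{\lambda'}\CM^2$ only $S_{(2,2)}\CM^2\cong\CM$ is $\sp(1)$-trivial, so $\dim_\RM(\Lambda^4_\RM\mm^\RM)^{\hh\oplus\sp(1)} = \dim_\CM(S_{(2,2)}\mm)^\hh$. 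The GL-identity $\Sym^2\Lambda^2\mm = S_{(2,2)}\mm\oplus\Lambda^4\mm$ (wedge as projection) then gives $(S_{(2,2)}\mm)^\hh + (\Lambda^4\mm)^\hh = (\Sym^2\Lambda^2\mm)^\hh$. The symplectic form on $\mm$ induces an $\hh$-invariant non-degenerate symmetric form on $\Lambda^2\mm$, non-degenerate on each of the irreducible summands $\CM\omega$ and $N := \Lambda^2_0\mm$; both are therefore orthogonal-type irreducibles and $(\Sym^2\Lambda^2\mm)^\hh = 2$. Since $\omega\wedge\omega$ provides one invariant in $\Lambda^4\mm$, the remaining claim is that $(\Lambda^4\mm)^\hh = 1$; equivalently, that the image of the distinguished invariant $B_N\in(\Sym^2 N)^\hh$ under the wedge map is a scalar multiple of $\omega^2$. \emph{This is the main obstacle}: it says that the primitive part of $\Lambda^4\mm$ carries no $\hh$-invariants, a statement that must be extracted from the irreducibility of $N$.

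With the dimension condition in hand, Proposition~\ref{quaternionic} yields a unique $r>0$ such that $\mm^\RM$ is an $s$-representation of $(\hh\oplus\sp(1),B_\hh+rB_{\sp(1)})$. The associated irreducible symmetric pair of compact type has an $\Sp(1)$-factor in its isotropy, so the symmetric space is a Wolf space, and the classification of Wolf spaces in \cite[pp.~312-314]{besse} yields the tabulated pairs $(\hh,\mm)$. The converse---that each listed pair satisfies the hypothesis---follows from Proposition~\ref{rem}(3) via the same translation.
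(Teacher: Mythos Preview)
Your overall architecture matches the paper's: show $\hh$ is simple, prove $\dim_\RM(\Lambda^4_\RM\mm^\RM)^{\hh\oplus\sp(1)}=1$, apply Proposition~\ref{quaternionic}, and read off the Wolf-space list. The simplicity argument is essentially the paper's (there, a putative $\uu(1)$ generator would act as a real scalar by Schur plus commutation with the quaternionic structure, contradicting skew-symmetry; your $Je^{i\theta}=e^{-i\theta}J$ phrasing is equivalent).

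The divergence is in the dimension count, and this is exactly where your self-flagged obstacle sits. You pass through the Cauchy decomposition to $(S_{(2,2)}\mm)^\hh$ and then try to read it off from $\Sym^2\Lambda^2\mm=S_{(2,2)}\mm\oplus\Lambda^4\mm$, which leaves you needing $\dim(\Lambda^4\mm)^\hh=1$. The paper never faces this question. It first reuses \eqref{u1} to get $(\Lambda^4_\RM\mm^\RM)^{\hh\oplus\uu(1)}=\llbracket(\End\Lambda^2\mm)^\hh\rrbracket$, which is two-dimensional by the hypothesis and is spanned by $\Omega_1:=\omega_I\wedge\omega_I$ and $\Omega_2:=\omega_J\wedge\omega_J+\omega_K\wedge\omega_K$. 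A two-line computation with \eqref{ind} then gives $\jj_*\Omega_1=-4\omega_K\wedge\omega_I=-\jj_*\Omega_2$, so the $\sp(1)$-invariant subspace is the line $\RM(\Omega_1+\Omega_2)$. No information about $(\Lambda^4\mm)^\hh$ is required.

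Your route can in fact be completed: the $\sp(\mm)$-invariant tensor
\[
T(u,v,w,z):=\omega(u,w)\omega(v,z)-\omega(u,z)\omega(v,w)+2\,\omega(u,v)\omega(w,z)
\]
lies in $\Sym^2\Lambda^2\mm$ and satisfies the Bianchi identity, hence gives a non-zero element of $(S_{(2,2)}\mm)^\hh$; together with $\omega\wedge\omega\in(\Lambda^4\mm)^\hh$ and $\dim(\Sym^2\Lambda^2\mm)^\hh=2$ this forces both pieces to be one-dimensional. One caveat on the converse: the paper does \emph{not} invoke Proposition~\ref{rem} here but verifies the irreducibility of $\Lambda^2_0\mm$ case by case, and your proposed reverse translation is not quite enough either, since $\dim(S_{(2,2)}\mm)^\hh=1$ alone does not recover the number of irreducible summands of $\Lambda^2\mm$.
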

\begin{proof} Let $\ii$ denote the complex structure of $\mm$ and let $\jj$ be the quaternionic structure, {\em i.e.} a real 
endomorphism of $\mm^\RM$ anti-commuting with $\ii$ and satisfying $\jj^2=-\id$.
Like before, if $\hh$ is not simple, one can write $\hh = \hh_0 \oplus \hh_1$, $\mm =E \otimes F$ and
$$
\Lambda^2 \mm 
\cong 
(\Lambda^2 E \otimes \Sym^2F)\,    \oplus  \, (\Sym^2 E \otimes \Lambda^2 F).
$$
If $E$ and $F$ have both dimension larger than one, then both summands in the above expression have the same property, which is 
impossible because of the hypothesis. Assume that one factor, say $F$, is
one-dimensional. Since $F$ is a faithful representation, one must have $\hh_1 = \uu(1)$. Let $a\ne 0$
be the endomorphism of $\mm$ determined by the generator of $\uu(1)$. By the Schur Lemma there exists $z\in\CM^*$ such that $a=z\,\id$. 
Since $a$ commutes with the quaternionic structure we must have $z\in\RM$, which is impossible since $a$ has to be a skew-symmetric
endomorphism of $\mm^\RM$ with respect to some $\hh$-invariant scalar product. Thus $\hh$ is simple.

The Lie algebra $\sp(1)$ acts on the fourth exterior power $\Lambda^4_\RM \mm^\RM$ by extending the action of the (real) endomorphisms
$\ii$ and $\jj$ of $\mm^\RM$.
We claim that 
the space of invariant elements $(\Lambda_\RM^4 \mm^\RM)^{\hh \oplus \sp(1)}$ is one-dimensional. Using \eqref{u1} we see that
$$ (\Lambda_\RM^4 \mm^\RM)^{\hh \oplus \uu(1)}
=\llbracket( \End(\Lambda^2 \mm))^{\hh} \rrbracket
=\llbracket (\End(\Lambda^2_0 \mm\oplus\CM))^{\hh} \rrbracket=\llbracket (\End(\Lambda^2_0 \mm))^{\hh} \rrbracket\oplus \RM$$
is two-dimensional since by assumption $\Lambda^2_0 \mm$
is irreducible. The first summand is generated by $\Omega_1:=\omega_I\wedge\omega_I$, whereas the second one is generated
by $\Omega_2:=\omega_J\wedge\omega_J+\omega_K\wedge\omega_K$. Using \eqref{ind} we readily obtain 
$\jj_*\Omega_1=-4\omega_K\wedge\omega_I$ and $\jj_*\Omega_2=4\omega_I\wedge\omega_K$, thus showing that 
$(\Lambda_\RM^4 \mm^\RM)^{\hh \oplus \sp(1)}$ is one-dimensional and spanned by $\Omega_1+\Omega_2$. 
We can therefore 
apply Proposition~\ref{complex} to realize $\mm^\RM$ as an $s$-representation of $\hh\oplus\sp(1)$.
Consequently, $\mm$ is the isotropy representation of a Wolf space, and thus belongs to the above table by \cite[pp. 312-314]{besse}.

Conversely, it is standard fact that $\Lambda^2_0\HM^n$ is an irreducible $\sp(n)$-representation, and
one can check ({\em e.g.} using the LiE software \cite{lie}) that for all other representations $\mm$ in this table,
$\Lambda^2_0\mm$ is indeed irreducible. 
\end{proof}

\section{Spin representations and exceptional Lie algebras}

In this section we obtain a completely self-contained
construction of exceptional simple Lie algebras based on the results in Section 2. We will only give the details for the construction of $\E_8$
arising from the half-spin representation of $\Spin(16)$, since all the other exceptional simple Lie algebras
can be constructed by similar methods using spin representations. Conversely, we give a short algebraic argument 
showing that there are no other spin representations which are $s$-representations 
but those giving rise to exceptional Lie algebras.

\subsection{A computation-free argument for the existence of $\E_8$}
As already mentioned in the introduction, the only non-trivial part in the construction of $\E_8$
is to check that the natural bracket on $\spin(16)\oplus\Sigma_{16}^+$ constructed 
in Lemma \ref{l1} satisfies the Jacobi identity. This follows directly from Corollary \ref{c1}, together with the following:

\begin{epr}\label{41}
The fourth exterior power of the real half-spin representation $\Sigma_{16}^+$ has no trivial summand.
\end{epr}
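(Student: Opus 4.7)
The plan is to exploit the fact that $\Lambda^2\Sigma_{16}^+$ has a very short decomposition into $\spin(16)$-irreducibles, and then to analyze $\Lambda^4\Sigma_{16}^+$ as a quotient of $\Sym^2(\Lambda^2\Sigma_{16}^+)$, thereby reducing the question to a manageable invariant-counting problem.

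First, I would recall the well-known decomposition of the tensor square of the half-spin representation of $\Spin(2n)$ into exterior powers of the vector representation (with the middle piece replaced by its self-dual part). For $n=8$ this gives
$$\Sigma_{16}^+\otimes\Sigma_{16}^+ \;\cong\; \Lambda^0\RM^{16}\oplus\Lambda^2\RM^{16}\oplus\Lambda^4\RM^{16}\oplus\Lambda^6\RM^{16}\oplus\Lambda^{8,+}\RM^{16},$$
where $\Lambda^{8,+}\RM^{16}$ denotes the self-dual $8$-forms on $\RM^{16}$. Since $\Sigma_{16}^+$ is of real type, it carries an invariant symmetric bilinear form, so $\Lambda^0\RM^{16}$ must lie in $\Sym^2\Sigma_{16}^+$. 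The only way to distribute the remaining summands consistently with $\dim\Sym^2\Sigma_{16}^+=8256$ and $\dim\Lambda^2\Sigma_{16}^+=8128$ is
$$\Sym^2\Sigma_{16}^+ = \Lambda^0\RM^{16}\oplus\Lambda^4\RM^{16}\oplus\Lambda^{8,+}\RM^{16}, \qquad \Lambda^2\Sigma_{16}^+ = \Lambda^2\RM^{16}\oplus\Lambda^6\RM^{16}.$$
In particular $\Lambda^2\Sigma_{16}^+$ splits into exactly two irreducible $\spin(16)$-summands, both non-trivial.

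Next, I would use the $\spin(16)$-equivariant wedge-product surjection $\mu\colon\Sym^2(\Lambda^2\Sigma_{16}^+)\to \Lambda^4\Sigma_{16}^+$, $\alpha\cdot\beta\mapsto\alpha\wedge\beta$, whose kernel is the Weyl module $S^{(2,2)}\Sigma_{16}^+$. Computing invariants in the source via
$$\Sym^2(\Lambda^2\Sigma_{16}^+) \cong \Sym^2(\Lambda^2\RM^{16}) \oplus (\Lambda^2\RM^{16}\otimes\Lambda^6\RM^{16}) \oplus \Sym^2(\Lambda^6\RM^{16}),$$
Schur's lemma forces the middle term to contribute no invariant (since $\Lambda^2\RM^{16}$ and $\Lambda^6\RM^{16}$ are non-isomorphic irreducible $\spin(16)$-representations), while each outer term contributes exactly one invariant: the Killing form of $\so(16)\cong\Lambda^2\RM^{16}$ and the natural inner product on $\Lambda^6\RM^{16}$. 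Thus $\dim(\Sym^2\Lambda^2\Sigma_{16}^+)^{\spin(16)}=2$, and proving the proposition is equivalent to showing that both of these invariants lie in $\ker\mu=S^{(2,2)}\Sigma_{16}^+$.

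The main obstacle is precisely this last step. Under $\mu$ the Killing-form invariant maps to the Casimir image $\tilde\rho(\Cas_{\so(16)})=\tfrac12\sum_k \tilde a_k\wedge\tilde a_k$ of Proposition \ref{cas}, so its vanishing in $\Lambda^4\Sigma_{16}^+$ is the very content of the $s$-representation property and cannot be deduced by a circular appeal to it. To break the circularity I would work in an explicit model of the half-spin representation, via the inclusion $\SU(8)\hookrightarrow\Spin(16)$ identifying the complexification of $\Sigma_{16}^+$ with $\Lambda^{\mathrm{even}}\CM^8$, and verify directly the cancellation of the two wedge-sums by an $\su(8)$-equivariant computation on this explicit basis; the second invariant, on $\Lambda^6\RM^{16}$, is handled by exactly the same method. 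A less conceptual fallback would be to apply the Weyl integration formula to $\Spin(16)$: express $\chi_{\Lambda^4\Sigma_{16}^+}$ in terms of Adams operations on $\chi_{\Sigma_{16}^+}$ via the Newton identities and verify directly that the integral over the maximal torus vanishes.
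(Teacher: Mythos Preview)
Your setup matches the paper's exactly: you obtain the decomposition $\Lambda^2\Sigma_{16}^+\cong\Lambda^2\RM^{16}\oplus\Lambda^6\RM^{16}$, deduce that $\dim(\Sym^2\Lambda^2\Sigma_{16}^+)^{\spin(16)}=2$, and reduce the problem to showing that the image of these invariants under the Bianchi map vanishes. But precisely at the point you label ``the main obstacle'' you give up on a conceptual argument and propose an explicit computation in the $\SU(8)$ model or via the Weyl integration formula. Since the whole purpose of the proposition is to give a \emph{computation-free} construction of $\ee_8$, this is not a gap in correctness so much as a failure to find the key idea.

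The paper's argument closes exactly the gap you identify, and it is worth knowing. Rather than your basis of invariants, take $R_1(v,w,v',w')=\langle v\wedge w,v'\wedge w'\rangle_\Sigma$ (the full metric, automatically killed by the Bianchi map) and $R_2(v,w,v',w')=\langle[v,w],[v',w']\rangle_{\spin}$, where $[.,.]$ is the bracket of Lemma~\ref{l1}. Assume for contradiction that some $\Omega\ne0$ lies in $(\Lambda^4\Sigma_{16}^+)^{\spin(16)}$; then $R_1,\Omega$ are linearly independent invariants in $\Sym^2\Lambda^2\Sigma_{16}^+$ (they sit in the complementary pieces $\ker\b$ and $\Lambda^4$), hence $R_2=kR_1+l\Omega$ for some scalars. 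Evaluating at $(v,w,v,w)$ kills the alternating $\Omega$ and gives $|[v,w]|^2=k\,|v\wedge w|^2$ for all $v,w$. Now comes the one-line insight you are missing: since $\dim\spin(16)=120<127=\dim\Sigma_{16}^+-1$, for any $w_0\ne0$ the subspace $\spin(16)\cdot w_0\subset\Sigma_{16}^+$ has codimension at least $8$, so one can pick $v_0$ not proportional to $w_0$ with $\langle v_0,aw_0\rangle_\Sigma=0$ for all $a$, i.e.\ $[v_0,w_0]=0$ while $v_0\wedge w_0\ne0$. This forces $k=0$, hence $[.,.]\equiv0$ on $\Sigma_{16}^+$, contradicting faithfulness of the representation. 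No explicit model and no character integral is needed; the entire content is the numerical inequality $120<127$.
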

\bp
One can use the plethysm function of the LiE software \cite{lie} to check that the fourth exterior power of $\Sigma_{16}^+$ 
has nine irreducible summands, each of them being non-trivial. However, our purpose is exactly to replace such 
brute force computations by conceptual arguments!

Let $\langle.,.\rangle_\S$ and $\langle.,.\rangle_\Sigma$ be $\Spin(16)$-invariant scalar products 
on $\spin(16)$ and $\Sigma_{16}^+$ respectively.
We start by recalling that the second exterior power of the real half-spin representation in dimension $8k$ decomposes
in irreducible summands as
$$\Lambda^2(\Sigma_{8k}^+)\simeq\bigoplus_{i=1}^k\Lambda^{4i-2}(\RM^{8k}).$$
This isomorphism can also be proved in an elementary way. Indeed, the right hand term
acts skew-symmetrically and faithfully by Clifford multiplication on $\Sigma_{8k}^+$
and thus can be identified with a sub-representation of $\Lambda^2(\Sigma_{8k}^+)$.
On the other hand, its dimension is equal to
$$\dim\bigoplus_{i=1}^k\Lambda^{4i-2}(\RM^{8k})=\frac18\left(2^{8k}-(1+i)^{8k}-(1-i)^{8k}\right)=
2^{4k-2}(2^{4k-1}-1)=\dim\,\Lambda^2(\Sigma_{8k}^+).$$
For $k=2$ we thus get 
\beq \label{s16}\Lambda^2(\Sigma_{16}^+)\simeq \Lambda^2(\RM^{16})\oplus\Lambda^6(\RM^{16}).\eeq
Recall the standard decomposition
$$\Sym^2(\Lambda^2(\Sigma_{16}^+))\simeq\R\oplus\Lambda^4(\Sigma_{16}^+),$$
where $\R$ is the kernel of the Bianchi map $\b:\Sym^2(\Lambda^2(\Sigma_{16}^+))\to\Lambda^4(\Sigma_{16}^+).$
The trace element $R_1\in\Sym^2(\Lambda^2(\Sigma_{16}^+))$
defined by 
$$R_1(v, w,v', w'):=\langle v\wedge w,v'\wedge w'\rangle_\Sigma $$
is invariant under the action of $\Spin(16)$ and belongs to $\R$ since $\b(R_1)=0$.

Assume for a contradiction that $\Lambda^4(\Sigma_{16}^+)$ contains some invariant element $\Omega$ and consider the invariant 
element 
$R_2\in\Sym^2(\Lambda^2(\Sigma_{16}^+))$
defined by 
$$R_2(v, w,v', w'):=\langle [v,w],[v',w']\rangle_\S ,$$
where $[.,.]$ is the bracket defined by Lemma \ref{l1}. 
Since the two irreducible summands in \eqref{s16} are not isomorphic, the space of invariant elements in 
$\Sym^2(\Lambda^2(\Sigma_{16}^+))$ has dimension two. Hence there exist 
real constants $k,l$ such that $R_2=kR_1+l\Omega$. In particular we would have
\beq\label{a}|[v,w]|_\S^2=k|v\wedge w|_\Sigma^2,\qquad\forall v,w\in\Sigma^+_{16}.\eeq
Since $\dim(\spin(16))=120$ is strictly smaller than $\dim(\Sigma^+_{16})-1=127$, one can find non-zero vectors
$v_0,w_0\in \Sigma^+_{16}$ such that $v_0\wedge w_0\ne 0$ and $\langle v_0,aw_0\rangle_\Sigma =0$ for all $a\in\spin(16)$. 
By the definition of the bracket in Lemma \ref{l1} (4), this implies
$[v_0,w_0]=0$, so using \eqref{a} for $v=v_0$ and $w=w_0$ yields $k=0$. By  \eqref{a} again, this would imply 
$[v,w]=0$ for all $v,w\in\Sigma^+_{16}$, so we would have
$$0=\langle a,[v,w]\rangle_\S =\langle av,w\rangle_\Sigma ,\qquad \forall a\in\spin(16),\ \forall v,w\in\Sigma^+_{16},$$
which is clearly a contradiction.
\r

\subsection{The construction of $\F_4$, $\E_6$ and $\E_7$}
Consider the following spin representations: $\Sigma_9$, which is real, $\Sigma_{10}$ which is purely 
complex, and $\Sigma_{12}^+$ which is quaternionic. 
In order to show that they give rise to $s$-representations of $\spin(9)$, $\spin(10)\oplus\uu(1)$ and 
$\spin(12)\oplus\sp(1)$ respectively,
we need to check that one can apply the criteria in Corollary~\ref{c1}, Proposition~\ref{complex} and 
Proposition~\ref{quaternionic}. 
Taking into account the results in Section 3, it suffices to show that $(\Lambda^4\Sigma_9)^{\spin(9)}=0$, 
and that $\Lambda^2\Sigma_{10}$  and $\Lambda^2_0\Sigma_{12}^+$ are irreducible. The first assertion can 
be proved like in Proposition \ref{41}, whereas the 
two other follow from the classical decompositions of the second exterior power of spin representations
$$\Lambda^2\Sigma_{10}\cong \Lambda^3(\CM^{10}),\qquad \Lambda^2\Sigma_{12}^+\cong \Lambda^0(\CM^{12})\oplus \Lambda^4(\CM^{12}).$$

\subsection{On spin representations of Lie type}

In this final part we will show that very few spin representations are of Lie type. To make things
precise, recall that the real Clifford algebras $\Cl_n$ are of the form 
$\KM(r)$ or $\KM(r)\oplus \KM(r)$ where:
\begin{center}
\begin{tabular}{|c|c|c|c|c|c|c|c|c|}
\hline
$n:$ & $8k+1$   &    $8k+2$   & $8k+3$   & $8k+4$   & $8k+5$   & $8k+6$   & $8k+7$   & $8k+8$     \\
\hline
$r:$ & $2^{4k}$ & $2^{4k}$ & $2^{4k}$ & $2^{4k+1}$ & $2^{4k+2}$ & $2^{4k+3}$ & $2^{4k+3}$ & $2^{4k+4}$\\
 \hline
$\Cl_n:$ & $\CM(r)$ & $\HM(r)$ & $\HM(r)\oplus \HM(r)$ & $\HM(r)$ & $\CM(r)$ & $\RM(r)$ & $\RM(r)\oplus \RM(r)$ & $\RM(r)$  \\
\hline
\end{tabular}
\end{center}
The Clifford representation of the real Clifford algebra is by definition the unique irreducible representation 
of $\Cl_n$ for $n\ne 3\mod 4$, and the direct sum of the two inequivalent representations
for $n= 3\mod 4$. The real spinor representation $\Sigma_n$ is the restriction of the Clifford representation to $\spin(n)\subset\Cl_{n-1}$
(note the shift from $n$ to $n-1$). For $n\ne 0\mod 4$ the spin representation is irreducible, and for $n=0\mod 4$ it decomposes
as the direct sum of two irreducible representations $\Sigma_n=\Sigma^+_n\oplus\Sigma^-_n$. We introduce the notation
$$\Sigma_n^{(+)}:=\begin{cases}\Sigma_n^+& \hbox{if}\  n=0\mod 4,\\
                        \Sigma_n& \hbox{if}\  n\ne 0\mod 4.\end{cases}$$ 
The table above shows that 
the spin representation $\Sigma_n^{(+)}$ is of real type for $n=0,1,7$ mod 8, of complex type for $n=2$ or $6$ mod 8 and of quaternionic
type for $n=3,4,5$ mod 8. We define the Lie algebras 
$$\widetilde\spin(n):=\begin{cases}\spin(n) &\hbox{if}\ n=0,1,7 \mod 8,\\  
\spin(n)\oplus \uu(1) &\hbox{if}\ n=2\ \hbox{or}\ 6 \mod 8,\\
\spin(n)\oplus \sp(1) &\hbox{if}\ n=3,4,5 \mod 8. \end{cases}$$ 
We can view $\Sigma_n^{(+)}$ as a $\widetilde\spin(n)$-representation, where
the $\uu(1)$ or $\sp(1)$ actions are induced by the complex or quaternionic structure
of the spin representation in the last two cases.

We study the following question: {\em for which $n\ge 5$ is $\Sigma_n^{(+)}$ a representation of Lie type of $\widetilde\spin(n)$?}
We will see that there are almost no other examples but 
the above examples which lead to the construction of exceptional 
Lie algebras. This is a consequence of
the very special structure of the weights of the spin representations (see also \cite{mihaela}, \cite{cliff} for a much more general approach
to this question). 

\begin{epr}\label{spin} For $n\ge 5$,  the representation $\Sigma_n^{(+)}$ of $\widetilde\spin(n)$
is of Lie type if and only if $n\in\{5,6,8,9,10,12,16\}$.
\end{epr}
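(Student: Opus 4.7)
The plan is to prove the two directions separately.

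For \emph{sufficiency}, we invoke the constructions from the preceding subsections for $n\in\{9,10,12,16\}$, where $\Sigma_n^{(+)}$ has been realized as an $s$-representation of $\widetilde\spin(n)$ giving rise to $\ff_4$, $\ee_6$, $\ee_7$, $\ee_8$. The remaining three cases follow from classical low-dimensional isomorphisms: $\spin(5)\simeq\sp(2)$ identifies $\Sigma_5$ with $\HM^2$, yielding the isotropy representation of $\HM P^2$; $\spin(6)\simeq\su(4)$ identifies $\Sigma_6$ with $\CM^4$, with $\widetilde\spin(6)=\uu(4)$ acting as the isotropy representation of $\CM P^4$; and triality identifies $\Sigma_8^+$ with $\RM^8$, the isotropy representation of $S^8=\Spin(9)/\Spin(8)$.

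For \emph{necessity}, suppose $\mm:=\Sigma_n^{(+)}$ is of Lie type and let $\gg:=\widetilde\spin(n)\oplus\mm$ be the corresponding augmented Lie algebra. Then $\gg$ is a compact Lie algebra arising from an irreducible symmetric pair; since the dimensions of $\widetilde\spin(n)$ and $\mm$ never coincide for $n\ge5$, the pair is not of type II and $\gg$ is simple. The crucial step is the rank identity $\rk(\gg)=\rk(\widetilde\spin(n))$: a maximal torus $\mathfrak{t}\subset\widetilde\spin(n)$ acts on $\mm$ with weights $(\pm\tfrac12,\ldots,\pm\tfrac12)$ in the $\spin(n)$-directions, augmented by a nonzero $\pm 1$ coordinate along the $\uu(1)$- or $\sp(1)$-summand when present. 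Since no such weight vanishes, the centralizer of $\mathfrak{t}$ in $\gg$ reduces to $\mathfrak{t}$, making $\mathfrak{t}$ a Cartan subalgebra of $\gg$.

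The conclusion follows from a dimension count. Setting $r:=\rk(\widetilde\spin(n))$ and $d:=\dim_\RM\widetilde\spin(n)+\dim_\RM\mm$, the pair $(r,d)$ must match the rank and dimension of some compact simple Lie algebra. Since $r\le\lfloor n/2\rfloor+1$ while $\dim_\RM\mm$ grows essentially as $2^{n/2}$, and the maximum dimension of a rank-$r$ compact simple Lie algebra is bounded by $\max\{r(2r+1),248\}$ (the value $248$ being realized only by $\ee_8$ at $r=8$), a direct inequality rules out every $n\ge17$. For each of the five remaining cases $n\in\{7,11,13,14,15\}$, we compute $(r,d)$ explicitly and verify against the list of compact simple Lie algebras; for instance, $n=11$ yields $(6,122)$, whereas no rank-$6$ compact simple Lie algebra has dimension $122$.

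The main obstacle will be the small-$n$ bookkeeping, which requires careful attention to the $8$-periodicity governing real, complex, and quaternionic spin representations when computing $(r,d)$. Once the rank identity is in place, the asymptotic exponential-versus-polynomial comparison for $n\ge17$ is immediate.
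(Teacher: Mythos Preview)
Your sufficiency argument is fine and agrees with the paper's.

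For necessity, your strategy is genuinely different from the paper's, but there is a gap. You assert that $\gg$ ``arises from an irreducible symmetric pair'' and then deduce simplicity. This would require $[\mm,\mm]\subset\hh$, i.e.\ condition~(3) of Lemma~\ref{l1}; but the statement concerns representations \emph{of Lie type}, which by definition only requires conditions (1), (2) and (4). So the bracket on $\mm\times\mm$ may have a non-trivial $\mm$-component, $(\gg,\widetilde\spin(n))$ need not be a symmetric pair, and your route to simplicity collapses. Without simplicity your stated bound $\max\{r(2r+1),248\}$ is no longer the right one, since semisimple compact Lie algebras such as $\ee_8\oplus\mathfrak{a}_1$ at rank $9$ already violate it.

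The gap is reparable: $\gg$ is still compact (the form $B_\hh+B_\mm$ is positive definite and $\ad$-invariant by the definition of Lie type), your rank identity $\rk(\gg)=\rk(\widetilde\spin(n))$ is correct since the spin weights never vanish, and one checks that the centre of $\gg$ is trivial because $\mm$ is a faithful irreducible $\widetilde\spin(n)$-module of real type. So $\gg$ is semisimple of the given rank and dimension, and one can run the dimension count against \emph{all} semisimple compact Lie algebras of rank $r$ rather than only the simple ones; the unwanted values of $n$ are still excluded, though the bookkeeping is a bit heavier than you indicate (for instance $n=14$ gives $(r,d)=(8,220)$, which lies strictly between $136$ and $248$, so one must also rule out non-simple rank-$8$ candidates).

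By contrast, the paper avoids the classification entirely. It uses only that $\R\cup\W$ is the root system of the compact Lie algebra $\gg$ and then applies the elementary integrality constraint $q(\a,\b)=2\la\a,\b\ra/\la\b,\b\ra\in\ZM$ with $|q|\le 3$ to well-chosen pairs $(\a,\b)$, e.g.\ $\a=e_1+e_2$ and $\b=\tfrac12\sum e_i$ in the case $n=8k$, which forces $k\le 2$ directly. Your approach trades this local root-system argument for a global appeal to the classification of compact Lie algebras; it is more structural but less self-contained, and as written it needs the fix above.
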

\bp
The representation $\Sigma_n^{(+)}$ of $\widetilde\spin(n)$ is of Lie type if and only if 
there exists a Lie algebra structure on $\gg:=\widetilde\spin(n)\oplus\Sigma_n^{(+)}$ satisfying conditions (1), (2) and (4) in Lemma \ref{l1}
with respect to some $\ad_{\widetilde\spin(n)}$ invariant scalar products on $\widetilde\spin(n)$ and $\Sigma_n^{(+)}$. 
We will always consider some fixed Cartan subalgebra of $\widetilde\spin(n)$, which is automatically a Cartan subalgebra of
$\gg$ since the (half-)spin representations have no zero weight.

Consider first the case $n=8k$. Since $\widetilde\spin(8k)=\spin(8k)$, the scalar products above are unique up to some constant.
We choose the scalar product $\la.,.\ra$ on $\spin(8k)$ such that 
in some orthonormal basis $\{e_1,\ldots,e_{4k}\}$ of the Cartan subalgebra of $\spin(8k)$, the roots of $\spin(8k)$ are
$$\R=\{\pm e_i\pm e_j\ |\ 1\le i<j\le 4k\},$$
and the weights of the (complexified) half-spin representation $\Sigma_{8k}^+\otimes\CM$ are
$$\W=\{\tfrac12\sum_{i=1}^{4k}\e_ie_i\ |\ \e_i=\pm1, \
\e_1\ldots\e_{4k}=1\}.$$
The union $\R\cup\W$ is then the root system
of $\gg$, which is a Lie algebra of compact type. In particular, the quotient  
\beq\label{q}q(\a,\b):=\frac{2\la \a,\b\ra}{\la \b,\b\ra}\eeq
is an integer satisfying $|q(\a,\b)|\le 3$ for all $\a,\b\in
\R\cup\W$ (cf. \cite{adams}, p. 119). Taking $\a=e_1+e_2$ and
$\b=\tfrac12\sum_{i=1}^{4k}e_i$ we 
get $q(\a,\b)=2/k$, whence $k=1$ or $k=2$, so $n=8$ or $n=16$. Conversely, the real half-spin representations 
$\Sigma^+_8$ and $\Sigma^+_{16}$ are of Lie type (actually they are $s$-representations with augmented Lie algebras
$\spin(9)=\spin(8)\oplus\Sigma^+_8$ and 
$\ee_8=\spin(16)\oplus\Sigma^+_{16}$).

If $n=8k-1$, a similar argument using the root $\a=e_1$ of $\spin(8k-1)$ and the weight $\b=\tfrac12\sum_{i=1}^{4k-1}e_i$ 
of the spin representation shows that $q(\a,\b)=2/(4k-1)$ cannot be an integer. 

If $n=8k+1$, one has  $q(\a,\b)=1/k$ for $\a=e_1$ and $\b=\tfrac12\sum_{i=1}^{4k}e_i$, so $k=1$. 
Conversely, $\Sigma_9$ is an $s$-representation, as shown by the exceptional Lie algebra
$\ff_4=\spin(9)\oplus\Sigma_9$.

Consider now the case when the spin representation is complex, {\em i.e.} $n=4k+2$, with $k\ge 1$. Assume that on $\gg:=(\spin(4k+2)\oplus \uu(1))\oplus\Sigma_{4k+2}$ there exists a Lie algebra structure
satisfying conditions (1), (2) and (4) in Lemma \ref{l1}
with respect to some $\ad_{\spin(4k+2)\oplus \uu(1)}$ invariant scalar products on $\spin(4k+2)\oplus \uu(1)$ and $\Sigma_{4k+2}$.
The latter scalar product is defined up to a scalar, whereas for the first one there is a two-parameter family of possible choices. By rescaling, 
we may assume that the restriction of the scalar product on the $\spin(4k+2)$ summand is such that in some orthonormal basis $\{e_1,\ldots,e_{2k+1}\}$ 
of the Cartan subalgebra, the root system of $\spin(4k+2)$ is 
$$\R=\{\pm e_i\pm e_j\ |\ 1\le i<j\le 2k+1\}.$$
There exists a unique vector $e_{2k+2}\in\uu(1)$ such that  the set of weights of the representation  $\Sigma_{4k+2}\otimes\CM\cong\Sigma_{4k+2}\oplus\overline{\Sigma}_{4k+2}$ of $\spin(4k+2)\oplus \uu(1)$ is
$$\W=\{\tfrac12\sum_{i=1}^{2k+2}\e_ie_i\ |\ \e_i=\pm1, \
\e_1\ldots\e_{2k+2}=1\}.$$
We denote by $x:=|e_{2k+2}|^2$ its square norm. The root system of $\gg$ is clearly $\R(\gg)=\R\cup\W$.

Recall that for every non-orthogonal roots $\a$ and $\b$ of $\gg$, their sum or difference is again a root \cite{adams}.
On the other hand, neither the sum, nor the difference of the two roots 
$\a:=\tfrac12(\sum_{i=1}^{2k+2}e_i)$ and $\b:=\tfrac12(\sum_{i=1}^{2k}e_i-e_{2k+1}-e_{2k+2})$ of $\gg$
belongs to $\R(\gg)=\R\cup\W$. Thus $\la\a,\b\ra=0$, which implies $x=2k-1$. 
Consider now the root $\gamma:=e_1+e_2$. The integer defined in \eqref{q} is 
$$q(\gamma,\a)=\frac{2\la \gamma,\a\ra}{\la \a,\a\ra}=\frac{2}{\tfrac14(2k+1+x)}=\frac2k,$$
showing that necessarily $k=1$ or $k=2$. Conversely, both cases do occur, since $\Sigma_6$ and $\Sigma_{10}$ are $s$-representations 
of $\spin(6)\oplus\uu(1)\cong\uu(4)$ and $\spin(10)\oplus\uu(1)$ with augmented Lie algebras $\uu(5)$ and $\ee_6$ respectively.
 
Similar arguments (see also \cite{mihaela}) show that in the quaternionic case (when $n=3,4,5 \mod 8$) there are only two representations 
$\Sigma_n^{(+)}$ of $\spin(n)\oplus\sp(1)$ which are of Lie type, for $n=5$ and $n=12$. They are both $s$-representations and their 
augmented Lie algebras are $\spin(5)\oplus\sp(1)\oplus\Sigma_5\cong\sp(2)\oplus\sp(1)\oplus\HM^2\cong\sp(3)$ and 
$\spin(12)\oplus\sp(1)\oplus\Sigma_{12}^+\cong\ee_7$.\r

Note that  J. Figueroa-O'Farrill has recently asked in \cite[p. 673]{of08} about the existence of
Killing superalgebra structures on spheres other that $\SM^7$, $\SM^8$ and $\SM^{15}$.
Part I. in Proposition~\ref{spin} can be interpreted as a negative answer to this question.

%%%%%%%%%%%%%%%%%%%%%%%%%%%%%%%%%%%%%%%%%%%%%%%%%%%%%%%%%%%%%%%%%%%%%%%%%%%%

\labelsep .5cm


\begin{thebibliography}{22}
{\footnotesize


\bibitem{adams} {\sc J. Adams}, {\it Lectures on Exceptional Lie
    Groups,} eds. Zafer Mahmud and Mamoru Mimira, University of
  Chicago Press, Chicago, 1996.

\bibitem{baez} {\sc J. Baez,}
{\sl The Octonions}, 
Bull. Amer. Math. Soc.  {\bf 39} (2002), no. 2, 145--205.

\bibitem{besse}
\textsc{A. Besse,}
{\it Einstein manifolds},
 Ergebnisse der Mathematik und ihrer Grenzgebiete (3)  {\bf 10}
 Springer-Verlag, Berlin, 1987.

\bibitem{dynkin}
\textsc{E.B. Dynkin}, 
{\it Maximal subgroups of the classical groups},
Transl., Ser. 2, Am. Math. Soc. {\bf 6} (1957), 245--378.

\bibitem{of08}{\sc  J. Figueroa-O'Farrill},
{\sl A geometric construction of the exceptional Lie algebras $\F_4$ and $\E_8$},
Commun. Math. Phys. {\bf 283} (2008), no. 3, 663--674.

\bibitem{gsw} {\sc M. Green, J. Schwartz, E. Witten,} {\it Superstring Theory,} vol. 1, 
Cambridge University Press, Cambridge, 1987.

\bibitem{eh99}
{\sc J.-H. Eschenburg, E. Heintze,}
{\it Polar representations and symmetric spaces,}
J. Reine Angew. Math. {\bf 507}  (1999), 93--106.

\bibitem{hz}
{\sc E. Heintze, W. Ziller,}
{\it Isotropy irreducible spaces and s-representations},
Differential Geom. Appl.  {\bf 6} (1996), no. 2, 181--188. 

\bibitem{ko56}{\sc B. Kostant},
{\sl On invariant skew-tensors}, 
Proc. Nat. Acad. Sci. U. S. A.  {\bf 42} (1956), 148--151. 

\bibitem{ko}{\sc B. Kostant},
{\sl A cubic Dirac operator and the emergence of Euler number
  multiplets of representations for equal rank subgroups}, 
Duke Math. J. {\bf 100} (1999), no. 3, 447--501. 

\bibitem{lm} 
{\sc B. Lawson, M.-L. Michelson,} {\it Spin Geometry, }
Princeton University Press, 1989.

\bibitem{lie} {\sc LiE software}, {\tt http://www-math.univ-poitiers.fr/$\sim$maavl/LiE/}.

\bibitem{mihaela} {\sc A. Moroianu, M. Pilca}, {\sl Higher rank homogeneous Clifford structures}, arXiv:1110.4260.

\bibitem{cliff}
{\sc A. Moroianu, U. Semmelmann},
{\sl  Clifford structure on Riemannian manifolds,}
 Adv. Math. {\bf 228} (2011), no. 2, 940--967.

\bibitem{wz84} {\sc M. Wang, W. Ziller}, {\sl On the isotropy representations of a symmetric  
space,} Rend. Sem. Mat. Univ. Politec. Torino, Special Issue (1984), 253--261.

\bibitem{wz93} {\sc M. Wang, W. Ziller}, {\sl Symmetric spaces and strongly isotropy irreducible  
spaces,} Math. Ann. {\bf 296}
(1993), 285--326.

 }
\end{thebibliography}
\end{document}